\newtheorem{Theorem}{Theorem}[section]
\newtheorem{Proposition}[Theorem]{Proposition}
\newtheorem{Lemma}[Theorem]{Lemma}
\newcommand{\RR}{\mathbb{R}}
\providecommand{\norm}[1]{\left\lVert#1\right\rVert}
\newcommand{\qtq}[1]{\quad \text{#1}\quad }
\begin{document}

\title{Well-posedness and exponential decay estimates for a Korteweg--de Vries--Burgers equation with time-delay}

\author{Vilmos Komornik}
\address{
Département de mathématique,
Université de Strasbourg,
7 rue René Descartes,
67084 Strasbourg Cedex, France}
\email{komornik@math.unistra.fr}

\author{Cristina Pignotti}
\address{Dipartimento di Ingegneria e Scienze dell'Informazione e Matematica,
Università di L'Aquila,
Via Vetoio, Loc. Coppito, 67010 L'Aquila, Italy}
\email{pignotti@univaq.it}

\subjclass[2010]{Primary: 35Q53 Secondary: 93D15}
\keywords{KdV--Burgers equation; time delay; well-posedness; stabilization by feedback}
\date{Version 2019-06-02}

\numberwithin{equation}{section}

\begin{abstract}
We consider the KdV--Burgers equation and its linear version in presence of a delay feedback. We prove well-posedness of the models and exponential decay estimates under appropriate conditions on the damping coefficients. Our arguments rely on a Lyapunov functional approach  combined with a step by step procedure and semigroup theory.
\end{abstract}
\maketitle

\section{Introduction}
\label{s1}

The aim of this paper is to investigate   stability properties of the Cauchy problem

\begin{equation}\label{11}
\begin{cases}
u_t(x,t)+u_{xxx}(x,t)-u_{xx}(x,t) +\lambda_0 u(x,t)\\
\hspace{4cm}+\lambda u(x,t-\tau )+u(x,t) u_x(x,t)=0
&\qtq{in}\RR\times (0,\infty),\\
u(x, s)=u_0(x, s)&\qtq{in}\RR\times [-\tau, 0]
\end{cases}
\end{equation}
and its linear version

\begin{equation}\label{12}
\begin{cases}
u_t(x,t)+u_{xxx}(x,t)-u_{xx}(x,t) +\lambda_0 u(x,t) +\lambda u(x,t-\tau )=0
&\qtq{in}\RR\times (0,\infty),\\
u(x,s)=u_0(x,s)&\qtq{in}\RR\times [-\tau, 0].
\end{cases}
\end{equation}
Here the constant $\tau >0$ is the time delay and the coefficients $\lambda_0(x), \lambda(x)$ belong to $L^\infty(\RR).$

The Korteweg--de Vries--Burgers equation
\begin{equation}\label{13}
u_t+u_{xxx}-u_{xx} +uu_x=0  \qtq{in} \RR\times (0,\infty)
\end{equation}
models the unidirectional propagation of planar waves. The function $u=u(x,t)$ represents the amplitude of the wave at position $x$ and at time $t.$
In \cite{Amick} the authors proved that the $L^2-$norm of solutions to \eqref{13} tends to zero as $t\rightarrow\infty$ in a polynomial way, namely
\begin{equation*}
\norm{u(\cdot, t)}_{L^2(\RR)}\le C t^{-\frac 1 2}\qtq{for all} t>0,
\end{equation*}
with a positive constant $C.$
In \cite{CCKR} a damped KdV--Burgers equation is considered, namely

\begin{equation}\label{14}
\begin{cases}
u_t(x,t)+u_{xxx}(x,t)-u_{xx}(x,t) +\lambda_0 u(x,t) +u(x,t) u_x(x,t)=0
&\qtq{in}\RR\times (0,\infty),\\
u(x, 0)=u_0(x)&\qtq{in}\RR
\end{cases}
\end{equation}
together with its linear version, i.e., without the term $u\,u_x.$
The authors investigated the well-posedness and exponential stability for an indefinite damping $\lambda_0(x),$ giving exponential decay estimates on the $L^2-$norm of solutions to \eqref{14} under appropriate conditions on the damping coefficient $\lambda_0.$

The damped KdV equation
\begin{equation*}
u_t+bu_x+u_{xxx}+u u_x + a u=0
\qtq{in}\RR\times (0,\infty)
\end{equation*}
is instead studied in \cite{CCFN, LP}.
Concerning the KdV equation in a finite interval with localized damping, exponential decay estimates have been obtained in \cite{MVZ,P}.
Periodic conditions have been considered in \cite{Komornik, KRZ} while more general nonlinearities have been considered in \cite{RZ}.

In order to take into account the physical meaning of the models, it is natural to include delay effects. It is by now well-known from pioneer papers of Datko \cite{Datko}, Datko et al. \cite{DLP},  that an arbitrarily small time delay may gives instability phenomena in models which are uniformly asymptotically stable in absence of delay.
Nevertheless, appropriate choices of the time delay can restitute stability (cf. \cite{Gugat}) as well as appropriate feedback laws (cf. \cite{XYL, NPSicon06, NP15, LasWeb2016, NP18, KomPig2018}).  Then, our aim here is to furnish sufficient conditions on the coefficients $\lambda, \lambda_0$ in order to have well-posedness of the models (\ref{12}) and (\ref{11}) and exponential decay estimates.
We emphasize the fact that the results here obtained could not  be deduced from the general approaches of \cite{NP15, NP18} or \cite{KomPig2018}. Indeed, the methods there proposed would require a smallness assumption on the $L^\infty-$ norm of the delay feedback coefficient $\lambda_0.$
A KdV model in a finite interval with time delay in the boundary condition has been recently studied in \cite{BCV}. Concerning the KdV--Burgers equation in a bounded interval, a model with input delay and constant coefficient of the undelayed damping has been recently analyzed in \cite{KF}.

Note that under the assumption
\begin{equation}\label{15}
\lambda_0 (x) \ge \alpha_0 \qtq{for a.e.} x\in \RR
\end{equation}
with some positive constant $\alpha_0,$
if the coefficient of the delay term $\lambda$ satisfies the estimate
$\norm{\lambda}_\infty <\alpha_0,$  then we could easily obtain exponential decay estimates. Indeed, in such a case  the delay effect is compensated  by the undelayed  damping term (cf. \cite{NPSicon06, XYL}).

However, we will deal here with a more general setting.
First, for the sake of clearness, we restrict ouselves to the case of $\lambda_0$ bounded from below by a positive constant but
 it may be $\vert \lambda(x)\vert \ge \lambda_0 (x)$ in some part of the domain. Then, we extend our results to the case in which  the coefficient of the undelayed feedback $\lambda_0$ is also indefinite.

The paper is organized as follows. In section \ref{s2} we analyze well-posedness and exponential decay of the problem \eqref{12} under the assumption \eqref{15} while in section \ref{s3} we will focus on the nonlinear model \eqref{11} under the same assumption on the coefficient $\lambda_0$ of the undelayed feedback. Finally, in section \ref{s4} we generalize the results of previous sections by removing assumption \eqref{15}.

\section{The linearized KdV--Burgers equation\label{s2}}

First we analyze the linear model \eqref{12}. We prove the well-posedness via a step by step procedure. Then, under suitable conditions on the coefficients $\lambda$ and $\lambda_0,$ we deduce an exponential stability estimate.

\subsection{Well-posedness of the linear model\label{ss21}}

First we look at the problem
\begin{equation}\label{21}
\begin{cases}
u_t(x,t)+u_{xxx}(x,t)-u_{xx}(x,t) +\lambda_0 u(x,t)=0&\qtq{in}\RR\times (0,\infty),\\
u(x, 0)=u_0(x, 0)&\qtq{in}\RR.
\end{cases}
\end{equation}
The following well-posedness result is proved
in \cite{CCKR}.

\begin{Proposition}\label{p21}
If $\lambda_0\in L^\infty (\RR ),$ then the operator
$A_{\lambda_0}$ defined by the formula $A_{\lambda_0} u:=-u_{xxx}+u_{xx}-\lambda_0 u$ on ${\mathcal D}(A_{\lambda_0} ):=H^3(\RR)$ generates a strongly continuos semigroup in the Hilbert space $H:=L^2(\RR ).$
\end{Proposition}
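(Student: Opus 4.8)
The plan is to invoke the Lumer--Phillips theorem (a standard tool from semigroup theory), which asserts that a densely defined operator generates a strongly continuous contraction semigroup if and only if it is \emph{dissipative} and $\lambda I - A_{\lambda_0}$ is surjective for some (hence all) $\lambda > 0$. Since the zeroth-order term $-\lambda_0 u$ is a bounded perturbation (because $\lambda_0 \in L^\infty(\RR)$), by the bounded perturbation theorem it suffices to treat the principal part $B u := -u_{xxx} + u_{xx}$ on $\mathcal{D}(B) = H^3(\RR)$ and then add back the bounded operator $u \mapsto -\lambda_0 u$. I would first check that $H^3(\RR)$ is dense in $H = L^2(\RR)$, which is immediate since it contains the Schwartz space.

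The first substantive step is to verify dissipativity of $B$ (up to a shift). For $u \in H^3(\RR)$, I would compute the real part of the inner product $\langle B u, u\rangle_{L^2}$. The third-derivative term is skew-adjoint: integration by parts gives $\int_\RR (-u_{xxx}) u \, dx = \int_\RR u_{xx} u_x \, dx = -\tfrac12 \int_\RR \partial_x (u_x)^2 \, dx = 0$ for real $u$, and more carefully $\mathrm{Re}\int_\RR (-u_{xxx})\bar u \, dx = 0$ in the complex case, so it contributes nothing to the real part. The second-derivative term gives $\mathrm{Re}\int_\RR u_{xx}\bar u \, dx = -\int_\RR |u_x|^2 \, dx \le 0$. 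Hence $\mathrm{Re}\langle B u, u\rangle \le 0$, so $B$ is dissipative; after subtracting $\norm{\lambda_0}_\infty$ the full operator $A_{\lambda_0} - \norm{\lambda_0}_\infty I$ is dissipative as well. The adjoint dissipativity (needed for $m$-dissipativity via the range condition, or alternatively one checks the adjoint directly) follows from the analogous computation with the sign of the third-order term reversed.

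The key step, and the one I expect to be the main obstacle, is the range condition: showing that for some $\lambda > 0$ the equation $\lambda u - B u = f$, i.e. $u_{xxx} - u_{xx} + \lambda u = f$, is solvable in $H^3(\RR)$ for every $f \in L^2(\RR)$. The natural route is the Fourier transform on $\RR$: the equation becomes $\bigl(-i\xi^3 + \xi^2 + \lambda\bigr)\hat u(\xi) = \hat f(\xi)$, so formally $\hat u(\xi) = \hat f(\xi) / (\lambda + \xi^2 - i\xi^3)$. One must check that the multiplier $m(\xi) = (\lambda + \xi^2 - i\xi^3)^{-1}$ is bounded (its denominator has modulus bounded below by $\lambda + \xi^2 > 0$), and crucially that $\xi \mapsto (1+\xi^2)^{3/2} m(\xi)$ is bounded so that $\hat u$ lies in the weighted space defining $H^3$; indeed $|(1+|\xi|^3)\hat u(\xi)| \le C |\hat f(\xi)|$ because $|{-i\xi^3 + \xi^2 + \lambda}| \ge c(1 + |\xi|^3)$ for large $|\xi|$. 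This yields $u = \mathcal{F}^{-1}(m \hat f) \in H^3(\RR)$ with $\norm{u}_{H^3} \le C\norm{f}_{L^2}$, establishing surjectivity. The delicate point is the uniform lower bound on the symbol across all frequencies, balancing the cubic imaginary part against the quadratic real part.

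With dissipativity and the range condition in hand, Lumer--Phillips gives that $B$ (suitably shifted) generates a $C_0$-semigroup; adding the bounded operator $-\lambda_0 I$ preserves the generation property by the bounded perturbation theorem, and hence $A_{\lambda_0}$ generates a strongly continuous semigroup on $H = L^2(\RR)$, as claimed. Since this result is attributed to \cite{CCKR} in the excerpt, I would expect the authors either to cite that reference directly or to reproduce this Fourier-multiplier argument in condensed form.
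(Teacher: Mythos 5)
Your argument is correct, and there is nothing in the paper to compare it against line by line: the paper offers no proof of Proposition~\ref{p21}, it simply records the statement as proved in \cite{CCKR}. Your reconstruction --- Lumer--Phillips for the principal part $Bu=-u_{xxx}+u_{xxx}$... more precisely $Bu=-u_{xxx}+u_{xx}$ on $H^3(\RR)$, with skew-adjointness of the third-order term, $\mathrm{Re}\langle u_{xx},u\rangle=-\norm{u_x}_2^2\le 0$ for dissipativity, the Fourier-multiplier resolution of $\lambda u-Bu=f$ using $|\lambda+\xi^2-i\xi^3|\ge c(1+|\xi|^2)^{3/2}$ to land in $H^3$, and then the bounded perturbation $-\lambda_0 I$ --- is exactly the standard route and is essentially what \cite{CCKR} does. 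Two small remarks: the aside about adjoint dissipativity is unnecessary once you verify the range condition directly, and the passing claim that $A_{\lambda_0}-\norm{\lambda_0}_\infty I$ is dissipative is harmless but plays no role since you perturb after establishing generation for $B$. An even shorter variant, worth knowing, bypasses Lumer--Phillips entirely: the symbol of $B$ is $i\xi^3-\xi^2$, whose real part is $-\xi^2\le 0$, so the Fourier multipliers $e^{t(i\xi^3-\xi^2)}$ define explicitly a $C_0$-semigroup of contractions on $L^2(\RR)$ whose generator is $B$ with domain $H^3(\RR)$; the bounded perturbation step is then the same.
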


Now, using an iterative procedure (see e.g. \cite{NP15}) and standard semigroup arguments  (see e.g. \cite{Pazy}), we
can prove a well-posedness result for the problem \eqref{12}.

\begin{Theorem}\label{t22}
If $\lambda_0,\lambda\in L^\infty (\RR )$ and $u_0\in C([-\tau ,0]; H)$,  then there exists a unique solution $u\in C([-\tau, +\infty); H)$ of the problem \eqref{12}.
\end{Theorem}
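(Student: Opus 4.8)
The plan is to use the step-by-step method for delay equations, converting \eqref{12} into a finite-delay-free sequence of inhomogeneous Cauchy problems to which the semigroup from Proposition \ref{p21} applies. Write $S(t):=e^{tA_{\lambda_0}}$ for the strongly continuous semigroup generated by $A_{\lambda_0}$ on $H=L^2(\RR)$. The crucial observation is that on the first interval $[0,\tau]$ the delayed argument $t-\tau$ ranges over $[-\tau,0]$, where $u$ is prescribed to equal the known datum $u_0$. Hence on $[0,\tau]$ the equation \eqref{12} becomes
\begin{equation*}
u_t = A_{\lambda_0} u + f_1(t), \qquad f_1(t):=-\lambda\, u_0(\cdot, t-\tau),
\end{equation*}
a nonhomogeneous linear problem with prescribed source $f_1$ and initial value $u(\cdot,0)=u_0(\cdot,0)$.

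First I would check that $f_1\in C([0,\tau];H)$. Since $\lambda\in L^\infty(\RR)$, multiplication by $\lambda$ is a bounded operator on $H$ of norm $\norm{\lambda}_\infty$, and $t\mapsto u_0(\cdot,t-\tau)$ is continuous from $[0,\tau]$ into $H$ by the hypothesis $u_0\in C([-\tau,0];H)$; composing the two gives continuity of $f_1$. Standard semigroup theory (see \cite{Pazy}) then yields a unique mild solution
\begin{equation*}
u(t)=S(t)\,u_0(\cdot,0)+\int_0^t S(t-s)\,f_1(s)\,ds,\qquad t\in[0,\tau],
\end{equation*}
which belongs to $C([0,\tau];H)$ and matches $u_0(\cdot,0)$ at $t=0$.

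Next I would iterate. Suppose $u$ has been constructed as a continuous $H$-valued function on $[-\tau, k\tau]$ for some integer $k\ge 1$. On $[k\tau,(k+1)\tau]$ the delayed term $u(\cdot, t-\tau)$ now involves values of $u$ on $[(k-1)\tau, k\tau]$, which are already known and continuous; so with the source $f_{k+1}(t):=-\lambda\,u(\cdot,t-\tau)\in C([k\tau,(k+1)\tau];H)$ and initial value $u(\cdot,k\tau)$ (the terminal value carried over from the previous step), I would again invoke Duhamel's formula to obtain a unique mild solution on $[k\tau,(k+1)\tau]$. Because each piece is continuous and its value at the left endpoint equals the terminal value of the preceding interval, the glued function lies in $C([-\tau,+\infty);H)$. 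Uniqueness on each interval is inherited from the uniqueness of mild solutions (equivalently, from a Gronwall estimate on the integral equation), and global uniqueness follows by induction over the intervals $[k\tau,(k+1)\tau]$.

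No single step is a genuine obstacle here, since Proposition \ref{p21} already supplies the hard analytic input, namely generation of the semigroup for $A_{\lambda_0}$. The point requiring the most care is the bookkeeping of the iteration: one must verify at each stage that the source $f_{k+1}$ is indeed continuous with values in $H$ — which hinges precisely on the boundedness of multiplication by $\lambda$ together with the inductively established continuity of $u$ on the previous interval — and that the initial value fed into each subproblem is exactly the terminal value of the previous one, so that the concatenation is continuous across the nodes $t=k\tau$. Once this is in place, exhausting $[0,+\infty)$ by the intervals $[k\tau,(k+1)\tau]$ yields the global solution.
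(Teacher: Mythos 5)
Your proposal is correct and follows essentially the same route as the paper: regard \eqref{12} on each interval $[k\tau,(k+1)\tau]$ as an inhomogeneous Cauchy problem for the generator $A_{\lambda_0}$ with known source $-\lambda u(\cdot,t-\tau)$, solve by Duhamel, and iterate. Your version merely spells out the details (continuity of the source via boundedness of multiplication by $\lambda$, matching of endpoint values) that the paper leaves implicit.
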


\begin{proof}
First, we argue on the interval $[0,\tau].$
Then \eqref{12}  may be regarded as an inhomogeneous Cauchy problem of the form
\begin{equation}\label{22}
\begin{cases}
u_t(t)-A_{\lambda_0}u(t)=g_0(t)&\qtq{in}(0,\tau ),\\
u(0)=u_0,
\end{cases}
\end{equation}
where $g_0(t)=-\lambda u_0(t-\tau),$ for $t\in [0,\tau].$
This problem admits a unique solution $u(\cdot) \in C([0,\tau), H).$
Now, we consider $t\in [\tau, 2\tau ].$ Then, problem \eqref{12} can be rewritten as
\begin{equation}\label{23}
\begin{cases}
u_t(t)-A_{\lambda_0}u(t)=g_1(t)&\qtq{in}(\tau,2\tau ),\\
u(\tau )=u({\tau}_{-}),
\end{cases}
\end{equation}
with $g_1(t)=-\lambda u(t-\tau).$ Observe that we know $u(t)$ for $t\in [0,\tau]$ from the first step; so $g_1(t)$ can be considered as a known function for $t\in [\tau , 2\tau].$ Therefore, we deduce the existence of a solution $u(\cdot )\in C ([0, 2\tau ], H).$ By iterating this procedure we get a solution $u\in C([0, \infty ), H).$
\end{proof}

\subsection{Asymptotic stability of the linear model}\label{ss22}

Let us define the Lyapunov functionals
\begin{equation}\label{24}
E(t):= E(u(t))=\frac 12\int_\RR u^2 (x,t) dx
\end{equation}
and, for $\lambda\in L^\infty (\RR ),$
\begin{equation}\label{25}
{\mathcal E}(t):= {\mathcal E}(u(t))=\frac 12\int_\RR u^2 (x,t) dx+\frac 12\int_{t-\tau }^t\int_\RR e^{-(t-s)}\vert \lambda (x)\vert u^2 (x,s)\ dx\  ds.
\end{equation}

Setting
\begin{equation}\label{26}
c_p:=\Big (1 - \frac 1 {2p}\Big ) \Big (\frac 2 p \Big )^{\frac 1 {2p-1}} \qtq{for} 1\le p<\infty,
\end{equation}
we can prove the following exponential stability result.

\begin{Theorem}\label{t23}
Assume that $\lambda , \lambda_0\in L^\infty (\RR)$ and $\lambda_0$ satisfies \eqref{15}.
If  there exist a positive constant $\alpha$ and a function $\beta\in L^p(\RR),$ for some $1\le p<\infty$  such that the function $\lambda$ satisfies
\begin{equation}\label{27}
\frac {e^\tau +1} 2\vert \lambda (x)\vert \le \alpha + \beta(x)\qtq{for a.e.}x\in\RR
\end{equation}
with
\begin{equation}\label{28}
0\le\alpha< \alpha_0 \qtq{and}
\norm{\beta}_p<\Big(\frac{\alpha_0-\alpha }{c_p}\Big)^{1-\frac 1 {2p}},
\end{equation}
where $c_p$ is defined in $(\ref{26}),$ then the problem \eqref{12} is exponentially stable.
In particular, the solutions $u$ of \eqref{12} satisfy the inequalities
\begin{equation}\label{29}
{\mathcal E}(t) \le C(u_0) e^{-\gamma t}
\end{equation}
where
\begin{equation}\label{210}
\gamma = \min \left\{2 \Big (
\alpha_0 -\alpha -\frac {2p-1}{2p} \Big (\frac 2 p \Big )^{\frac 1 {2p-1}}
\Vert \beta \Vert_p^{\frac {2p}{2p-1}}
\Big ),1\right\}
\end{equation}
and
\begin{equation}\label{211}
C(u_0)=\frac 12\Vert u(0)\Vert^2_2 +\int_{-\tau}^0 e^s\vert \lambda\vert\cdot \Vert u(s)\Vert_2^2 ds.
\end{equation}
\end{Theorem}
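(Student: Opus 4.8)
The plan is to estimate the time derivative of the Lyapunov functional $\mathcal E$ defined in \eqref{25} and to establish the differential inequality $\mathcal E'(t)\le-\gamma\,\mathcal E(t)$; the decay \eqref{29} then follows by Gronwall's lemma, since evaluating \eqref{25} at $t=0$ gives $\mathcal E(0)\le C(u_0)$ with $C(u_0)$ as in \eqref{211}. All the computations below are carried out first for smooth, spatially decaying solutions — for which every integration by parts is legitimate — and are then transferred to the general solution of Theorem \ref{t22} by a density argument.

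I would begin by differentiating $E(t)=\frac12\int_\RR u^2\,dx$. Inserting \eqref{12} and integrating by parts, the dispersive term $\int_\RR u\,u_{xxx}\,dx$ vanishes while the Burgers term yields the favourable dissipation $-\int_\RR u_x^2\,dx$, so that
\[
E'(t)=-\int_\RR u_x^2\,dx-\int_\RR\lambda_0 u^2\,dx-\int_\RR\lambda\,u(x,t-\tau)u(x,t)\,dx .
\]
Next I differentiate the memory term $F(t)$ (the second summand in \eqref{25}); Leibniz' rule produces the boundary contributions at $s=t$ and $s=t-\tau$ together with the $-F(t)$ coming from the exponential weight, namely
\[
F'(t)=-F(t)+\tfrac12\int_\RR|\lambda|u^2(x,t)\,dx-\tfrac{e^{-\tau}}2\int_\RR|\lambda|u^2(x,t-\tau)\,dx .
\]
The crucial bookkeeping is to bound the cross term by Young's inequality in the form $|ab|\le\frac12(e^{\tau}a^2+e^{-\tau}b^2)$: the weight $e^{-\tau}$ is chosen exactly so that the $u^2(x,t-\tau)$ contribution it generates cancels the last term of $F'(t)$. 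Adding $E'$ and $F'$ then leaves only present-time quantities,
\[
\mathcal E'(t)\le-\int_\RR u_x^2\,dx+\int_\RR\Big(\tfrac{e^{\tau}+1}2|\lambda|-\lambda_0\Big)u^2\,dx-F(t),
\]
and hypotheses \eqref{27} and \eqref{15} let me replace the bracket by $\alpha-\alpha_0+\beta(x)$.

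The heart of the proof — and the step I expect to be the main obstacle — is to absorb the indefinite term $\int_\RR\beta\,u^2\,dx$ into the dissipation. I would establish the interpolation estimate
\[
\int_\RR\beta\,u^2\,dx\le\int_\RR u_x^2\,dx+c_p\,\norm{\beta}_p^{\frac{2p}{2p-1}}\int_\RR u^2\,dx,
\]
with $c_p$ as in \eqref{26}, by the following chain: Hölder's inequality gives $\int_\RR\beta u^2\,dx\le\norm{\beta}_p\norm{u}_{2p/(p-1)}^2$; the elementary one-dimensional bounds $\norm{u}_\infty^2\le 2\norm{u}_2\norm{u_x}_2$ and $\norm{u}_{2p/(p-1)}^2\le\norm{u}_\infty^{2/p}\norm{u}_2^{2-2/p}$ combine into $\norm{u}_{2p/(p-1)}^2\le 2^{1/p}\norm{u}_2^{2-1/p}\norm{u_x}_2^{1/p}$; and finally Young's inequality with conjugate exponents $2p$ and $\frac{2p}{2p-1}$, applied so that the $\norm{u_x}_2^{1/p}$ factor is raised to the power $2p$ and its coefficient is normalised to $1$, produces precisely the constant $c_p$. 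The $\int_\RR u_x^2\,dx$ so generated is then exactly cancelled by the Burgers dissipation.

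Combining everything, I arrive at
\[
\mathcal E'(t)\le\Big(\alpha-\alpha_0+c_p\norm{\beta}_p^{\frac{2p}{2p-1}}\Big)\int_\RR u^2\,dx-F(t)=-2\delta_0\,E(t)-F(t),
\]
where $\delta_0:=\alpha_0-\alpha-c_p\norm{\beta}_p^{\frac{2p}{2p-1}}$. Assumption \eqref{28} is exactly what guarantees $\delta_0>0$: raising the bound on $\norm{\beta}_p$ to the power $\frac{2p}{2p-1}$ gives $c_p\norm{\beta}_p^{\frac{2p}{2p-1}}<\alpha_0-\alpha$. Since $\gamma=\min\{2\delta_0,1\}$ from \eqref{210} satisfies $\gamma\le 2\delta_0$ and $\gamma\le 1$, and since $E,F\ge 0$, I conclude $\mathcal E'(t)\le-\gamma E(t)-\gamma F(t)=-\gamma\,\mathcal E(t)$, whence \eqref{29} by integration.
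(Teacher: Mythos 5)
Your proposal is correct and follows essentially the same route as the paper: differentiate the functional $\mathcal E$ of \eqref{25}, cancel the delayed term via Young's inequality with weights $e^{\tau},e^{-\tau}$, absorb $\int_\RR\beta u^2\,dx$ into the dissipation $-\int_\RR u_x^2\,dx$ through the chain Hölder $\to$ $\norm{u}_\infty^2\le 2\norm{u}_2\norm{u_x}_2$ $\to$ Young with exponents $2p$ and $\tfrac{2p}{2p-1}$ (which is exactly where $c_p$ arises), and conclude by Gronwall. The only differences are presentational (you isolate the interpolation step as a standalone inequality rather than carrying an explicit parameter $\delta$), so no further comparison is needed.
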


\begin{proof}
For the computations we consider $u_0\in H^3,$ then $u\in H^3$ (see \cite[Th. 4.7]{CCKR}).
We then extend the result to every solution in $H$ by density.
By differentiating ${\mathcal E}(t)$ we obtain
\begin{multline*}
\frac {d{\mathcal E}} {dt} (t)
=\int_\RR u(t)(u_{xx}(t)-\lambda_0 u(t)-\lambda u(t-\tau )) dx
+\frac 12\int_\RR \vert \lambda\vert u^2(t) dx\\
- \frac 12e^{-\tau}\int_\RR \vert \lambda\vert u^2(t-\tau ) dx
-\frac 12\int_{t-\tau }^t\int_\RR e^{-(t-s)}\vert \lambda (x)\vert u^2 (x,s)dx \, ds,
\end{multline*}
where we used the equation and the fact that
\begin{equation*}
\int_\RR u  u_{xxx}\, dx =0
\qtq{for all}u\in H^3.
\end{equation*}
Then, integrating by parts, using the Young inequality and recalling \eqref{15} and \eqref{27}, we get
\begin{align*}
\frac {d{\mathcal E}} {dt} (t)
&\le -\int_\RR u_x^2(t) dx -\alpha_0 \int_\RR u^2(t) dx +\frac {e^\tau +1} 2\int_\RR \vert \lambda(x)\vert  u^2 (t) dx\\
&\qquad\qquad -\frac 12\int_{t-\tau }^t\int_\RR e^{-(t-s)}\vert \lambda (x)\vert u^2 (x,s)dx \, ds\\
&\le-\int_\RR u_x^2(t) dx - (\alpha_0 -\alpha) \int_\RR u^2(t) dx +
\int_\RR \beta(x) u^2 (t) dx\\
&\qquad\qquad -\frac 12\int_{t-\tau }^t\int_\RR e^{-(t-s)}\vert \lambda (x)\vert u^2 (x,s)\ dx \ ds.
\end{align*}
Using the Hölder inequality, hence we deduce that
\begin{equation}\label{212}
\frac {d{\mathcal E}} {dt} (t)\le-\Vert u_x(t)\Vert_2^2 -(\alpha_0 -\alpha )\Vert u(t)\Vert_2^2
+\Vert \beta\Vert_p \Vert u\Vert_{2q}^2 -\frac 12\int_{t-\tau }^t\int_\RR e^{-(t-s)}\vert \lambda (x)\vert u^2 (x,s)dx \, ds,
\end{equation}
where $q=\frac p {p-1}.$
Now observe that
\begin{equation}\label{213}
\Vert u\Vert_{2q}^2=\left (\int_\RR (u(t))^{2q} dx\right )^{\frac 1q}= \left (\int_\RR u^2(t) (u(t))^{\frac 2 {p-1}} dx\right )^{\frac 1 q}
\le \Vert u\Vert_2^{\frac 2 q}\Vert u\Vert_\infty^{\frac 2 {q(p-1)}}= \Vert u\Vert_2^{\frac 2 q}\Vert u\Vert_\infty^{\frac 2 p}.
\end{equation}

Then using \eqref{213} in \eqref{212} we  obtain
\begin{multline}\label{214}
\frac {d{\mathcal E}} {dt} (t)\le-\Vert u_x(t)\Vert_2^2 -(\alpha_0 -\alpha )\Vert u(t)\Vert_2^2+\Vert \beta\Vert_p \Vert u\Vert_2^{\frac 2 q} \Vert u(t)\Vert_{\infty}^{\frac 2 p}  \\
-\frac 12\int_{t-\tau }^t\int_\RR e^{-(t-s)}\vert \lambda (x)\vert u^2 (x,s)dx \, ds.
\end{multline}
Therefore, observing that (see \cite{CCKR})
\begin{equation}\label{215}
\Vert v\Vert_\infty^2\le 2\Vert v\Vert_2\Vert v_x\Vert_2
\end{equation}
for all $v\in H^1(\RR)$, and using the Young inequality, from \eqref{214} we deduce for every fixed $\delta >0$ the following inequalities:
\begin{equation*}
\begin{split}
\frac {d{\mathcal E}} {dt} (t)
&\le
-\Vert u_x(t)\Vert_2^2 -(\alpha_0 -\alpha)\Vert u(t)\Vert_2^2+2^{1/p}
\Vert \beta \Vert_p\Vert u\Vert_2^{\frac {2p-1} p} \Vert u_x\Vert_2^{\frac 1 p}\\
&\qquad\qquad -\frac 12\int_{t-\tau }^t\int_\RR e^{-(t-s)}\vert \lambda (x)\vert u^2 (x,s)dx \, ds\\
&\le -\Vert u_x(t)\Vert_2^2 -(\alpha_0 -\alpha)\Vert u(t)\Vert_2^2
+\Big ( \frac 1 {\delta } \Vert \beta \Vert_p\Vert u\Vert_2^{\frac {2p-1} p}  \Big ) \left ( \delta 2^{\frac 1 p}\Vert u_x\Vert_2^{\frac 1 p}      \right )
\\
&\qquad\qquad -\frac 12\int_{t-\tau }^t\int_\RR e^{-(t-s)}\vert \lambda (x)\vert u^2 (x,s)dx \, ds\\
&\le -\Vert u_x(t)\Vert_2^2 -(\alpha_0 -\alpha)\Vert u(t)\Vert_2^2
+\frac {
\Big ( \frac 1 {\delta } \Vert \beta \Vert_p\Vert u\Vert_2^{\frac {2p-1} p}  \Big )^\frac {2p}{2p-1}}{\frac {2p}{2p-1}} +
 \frac {
 \left ( \delta 2^{\frac 1 p}\Vert u_x\Vert_2^{\frac 1 p}      \right )^{2p}}{2p}\\
&\qquad\qquad -\frac 12\int_{t-\tau }^t\int_\RR e^{-(t-s)}\vert \lambda (x)\vert u^2 (x,s)dx \, ds.
\end{split}
\end{equation*}
Choosing $\delta$ such that $4\delta ^{2p}=2p,$ this yields
\begin{multline*}
\quad \frac {d{\mathcal E}} {dt} (t)\le - \Big (
\alpha_0 -\alpha -\frac {2p-1}{2p} \Big (\frac 2 p \Big )^{\frac 1 {2p-1}}
\Vert \beta \Vert_p^{\frac {2p}{2p-1}}
\Big ) \Vert u(t)\Vert_2^2\\
-\frac 12\int_{t-\tau }^t\int_\RR e^{-(t-s)}\vert \lambda (x)\vert u^2 (x,s)dx \, ds.\quad
\end{multline*}
Thus, under the assumption \eqref{28} we have
\begin{equation*}
\frac {d{\mathcal E}} {dt} (t)\le -  {\gamma} {\mathcal E}(t)
\end{equation*}
with
$\gamma$ as in \eqref{210}.
Now the estimate \eqref{29} follows from Gronwall's Lemma
with  $C(u_0) ={\mathcal E}(0).$
\end{proof}

\section{The nonlinear model
\label{s3}}

In order to prove the well-posedness of the nonlinear model \eqref{11} first we  consider the corresponding linear inhomogeneous initial value problem
\begin{equation}\label{31}
\begin{cases}
u_t(x,t)+u_{xxx}(x,t)-u_{xx}(x,t) +\lambda_0 u(x,t)+\lambda u(x,t-\tau)=f(x,t) &\qtq{in}\RR\times (0,T),\\
u(x,s)=u_0(x,s) &\qtq{in}\RR\times [-\tau, 0]
\end{cases}
\end{equation}
for some $T>0.$
Setting
\begin{equation*}
A_{\lambda_0}:= -\partial_x^3+\partial_x^2-\lambda_0 I,\quad {\mathcal{D}}(A_{\lambda_0})=H^3(\RR),
\end{equation*}
we can rewrite \eqref{31} in the form
\begin{equation}\label{32}
\begin{cases}
u_t(x,t)+\lambda u(x,t-\tau )=A_{\lambda_0} u(x,t) +f(x,t)&\qtq{in}\RR\times (0,T),\\
u(x,s)=u_0(x,s)&\qtq{in}\RR\times [-\tau, 0].
\end{cases}
\end{equation}

We know that $A_{\lambda_0}$ generates a strongly continuous semigroup of contractions in $L^2(\RR )$ (see \cite{CCKR}). Then, for any data $u_0\in C([-\tau, 0], H)$ and $f\in L^1(0,T;L^2(\RR)),$ the problem \eqref{32} has a unique mild solution $u\in C([-\tau, T];L^2(\RR)),$ satisfying the representation formula

\begin{equation}\label{33}
u(t)=S(t)u_0(0)-\int_0^t S(t-s)\lambda u(s-\tau )\, ds +\int_0^t S(t-s) f(s) \, ds,\quad t\in [0,T].
\end{equation}
One can show that the mild solution depends continuously on the initial data.

\begin{Proposition}\label{p31}
If $u_0\in C([-\tau, 0], H)$ and $f\in L^1(0,T;L^2(\RR)),$ then the solution of  \eqref{32}
satisfies the following estimate:
\begin{equation}\label{34}
\Vert u(t)\Vert_
{C([0,T];L^2(\RR))}\le
e^{\Vert\lambda\Vert_\infty T}
\left ( \Vert u(0)\Vert_{L^2(\RR)}+\Vert f\Vert_{L^1(0,T;L^2(\RR))}+
\Vert \lambda\Vert_\infty\int_{-\tau }^0\Vert u(s)\Vert_{L^2(\RR)} ds \right ).
\end{equation}
\end{Proposition}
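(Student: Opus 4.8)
The plan is to read the estimate directly off the variation-of-constants formula \eqref{33}, exploiting that $S(t)$ is a contraction semigroup, and then to close the resulting integral inequality by Gronwall's lemma. First I would take the $L^2(\RR)$-norm of both sides of \eqref{33} at a fixed time $t\in[0,T]$ and use the triangle inequality together with the contractivity bound $\norm{S(t-s)}_{\mathcal L(H)}\le 1$. Since $\norm{\lambda u(s-\tau)}_{L^2}\le\norm{\lambda}_\infty\norm{u(s-\tau)}_{L^2}$, this gives
\[
\norm{u(t)}_{L^2}\le \norm{u(0)}_{L^2}+\norm{f}_{L^1(0,T;L^2)}+\norm{\lambda}_\infty\int_0^t\norm{u(s-\tau)}_{L^2}\,ds.
\]

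The key step is to disentangle the delayed term. Substituting $\sigma=s-\tau$ yields $\int_0^t\norm{u(s-\tau)}_{L^2}\,ds=\int_{-\tau}^{t-\tau}\norm{u(\sigma)}_{L^2}\,d\sigma$; since the integrand is nonnegative and $t-\tau\le t$, this is bounded above by $\int_{-\tau}^0\norm{u(\sigma)}_{L^2}\,d\sigma+\int_0^t\norm{u(\sigma)}_{L^2}\,d\sigma$, where on $[-\tau,0]$ the function $u$ coincides with the prescribed history $u_0$ and is therefore a known quantity. Collecting all the data terms into the constant
\[
M:=\norm{u(0)}_{L^2}+\norm{f}_{L^1(0,T;L^2)}+\norm{\lambda}_\infty\int_{-\tau}^0\norm{u(s)}_{L^2}\,ds,
\]
I arrive at $\norm{u(t)}_{L^2}\le M+\norm{\lambda}_\infty\int_0^t\norm{u(\sigma)}_{L^2}\,d\sigma$ for every $t\in[0,T]$.

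Finally, applying Gronwall's inequality to the continuous function $t\mapsto\norm{u(t)}_{L^2}$ gives $\norm{u(t)}_{L^2}\le M\,e^{\norm{\lambda}_\infty t}\le M\,e^{\norm{\lambda}_\infty T}$, and taking the supremum over $t\in[0,T]$ produces exactly \eqref{34}. The argument is essentially bookkeeping; the only point that requires a little care is the change of variables in the delay integral and the correct separation of the history contribution on $[-\tau,0]$ from the self-referential part on $[0,t]$, since it is precisely this separation that puts the inequality into the form to which Gronwall's lemma applies. Everything else follows at once from the contraction property of the semigroup stated before \eqref{33}.
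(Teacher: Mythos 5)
Your argument is correct and coincides with the paper's own proof: both start from the representation formula \eqref{33}, use the contractivity of the semigroup and the triangle inequality, split the delayed integral into the history part on $[-\tau,0]$ and the part on $[0,t]$, and conclude with Gronwall's lemma. No issues.
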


\begin{proof}
It follows from the representation formula \eqref{33} that
\begin{align*}
\Vert u(t)\Vert_{L^2(\RR)}
&\le \Vert u(0)\Vert_{L^2(\RR)}
+\Vert \lambda\Vert_\infty\int_0^t\Vert u(s-\tau )\Vert_{L^2(\RR)}\, ds+\Vert f\Vert_{L^1(0,T;L^2(\RR))}\\
&\le \Vert u(0)\Vert_{L^2(\RR)}+\Vert f\Vert_{L^1(0,T;L^2(\RR))}+\Vert \lambda\Vert_\infty\int_{-\tau}^0\Vert u(s)\Vert_{L^2(\RR)}ds\\
&\qquad\qquad+
\Vert \lambda\Vert_\infty\int_0^t\Vert u(s)\Vert_{L^2(\RR)}\, ds .
\end{align*}
Then Gronwall's lemma implies \eqref{34}.
\end{proof}

Actually, the solution of \eqref{32} has an additional regularity.
Let us introduce the Banach space
\begin{equation*}
{\mathcal{B}}_T
:= C([0,T];L^2(\RR ))\cap L^2(0,T;H^1(\RR))
\end{equation*}
with the norm
\begin{equation*}
\Vert u\Vert_{\mathcal{B}_T}=\Vert u\Vert_{ C([0,T];L^2(\RR))}+\Vert \partial_xu\Vert_{L^2(0,T;L^2(\RR))}.
\end{equation*}
The following proposition holds.

\begin{Proposition}\label{p32}
If $u_0\in C([-\tau, 0], H)$ and $f\in L^1(0,T;L^2(\RR)),$ then the solution of  \eqref{32} belongs to ${\mathcal B}_T$ and
satisfies the estimate
\begin{equation}\label{35}
\Vert u\Vert_{{\mathcal B}_T}\le C_T \left \{ \Vert u(0)\Vert_{L^2(\RR)}+\Vert f\Vert_{L^1(0,T;L^2(\RR))}+
\left ( \Vert \lambda\Vert_\infty \tau^{1/2} + \Vert \lambda\Vert_\infty^{1/2} \right )\Vert u\Vert_{L^2(-\tau, 0; L^2(\RR))}  \right \}
\end{equation}
with
\begin{equation}\label{36}
C_T=\sqrt{\frac 3 2} \left (1+e^{2\Vert \lambda\Vert_\infty T}\right )^{1/2}
e^{(\Vert \lambda\Vert_\infty +\Vert \lambda_0\Vert_\infty) T}.
\end{equation}
Moreover, the following identity holds for all $t\in [0,T]$:
\begin{multline}\label{37}
\frac 12 \Vert u(t)\Vert_{L^2(\RR)}^2+\int_0^t\Vert u_x\Vert^2_{L^2(\RR)} ds+\int_0^t\int_\RR \lambda_0 u^2(x,s) \,dx\, ds
+\int_0^t\int_\RR\lambda u(x, s-\tau )u(x, s) \,dx\, ds \\
=\frac 12 \Vert u(0)\Vert_{L^2(\RR)}^2+\int_0^t\int_\RR f(x,s) u(x,s)\, dx\,ds.
\end{multline}
\end{Proposition}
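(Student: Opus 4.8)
The plan is to derive the energy identity \eqref{37} first and then read the $L^2(0,T;H^1)$ regularity off it, since the $C([0,T];L^2)$ bound is already furnished by Proposition \ref{p31}.

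First I would work with smooth data. For $u_0(0)\in H^3(\RR)$ and regular $f$, the mild solution \eqref{33} is in fact a classical solution lying in $H^3$ (the step-by-step scheme of Theorem \ref{t22} together with the regularity theory of \cite{CCKR}), so the following manipulations are justified. Multiplying the equation in \eqref{32} by $u$ and integrating over $\RR$, I would integrate by parts using $\int_\RR u u_{xxx}\,dx = 0$ and $-\int_\RR u u_{xx}\,dx = \int_\RR u_x^2\,dx$, valid for $H^3$ functions with vanishing contributions at infinity. This gives $\frac12\frac{d}{dt}\Vert u\Vert_2^2 + \Vert u_x\Vert_2^2 + \int_\RR \lambda_0 u^2\,dx + \int_\RR \lambda\, u(\cdot,t-\tau)\,u\,dx = \int_\RR f u\,dx$, and integrating in $t$ yields \eqref{37}. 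The identity then extends to arbitrary $u_0\in C([-\tau,0],H)$ and $f\in L^1(0,T;L^2)$ by approximating the data by smooth ones and passing to the limit, using the continuous dependence encoded in \eqref{34}.

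Next I would isolate the gradient term. Rearranging \eqref{37}, discarding the nonnegative $\frac12\Vert u(t)\Vert_2^2$, and estimating the rest: $|\int_0^t\!\int_\RR \lambda_0 u^2|\le \Vert\lambda_0\Vert_\infty \int_0^t \Vert u\Vert_2^2\,ds$; the delay term by Young's inequality $|\int_0^t\!\int_\RR \lambda u(\cdot,s-\tau)u|\le \frac{\Vert\lambda\Vert_\infty}{2}\int_0^t\Vert u(s-\tau)\Vert_2^2\,ds + \frac{\Vert\lambda\Vert_\infty}{2}\int_0^t\Vert u(s)\Vert_2^2\,ds$, where after the change of variable $\sigma=s-\tau$ the first integral splits into the history part $\Vert u\Vert_{L^2(-\tau,0;L^2)}^2$ and a solution part bounded by $T\Vert u\Vert_{C([0,T];L^2)}^2$; and the source term by $|\int_0^t\!\int_\RR f u|\le \Vert f\Vert_{L^1(0,T;L^2)}\Vert u\Vert_{C([0,T];L^2)}$. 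Every remaining $\int_0^t\Vert u\Vert_2^2$ is bounded by $T\Vert u\Vert_{C([0,T];L^2)}^2$. Substituting the sup bound \eqref{34}, in which I first replace $\int_{-\tau}^0 \Vert u(s)\Vert_2\,ds$ by $\tau^{1/2}\Vert u\Vert_{L^2(-\tau,0;L^2)}$ through Cauchy--Schwarz to produce the factor $\Vert\lambda\Vert_\infty\tau^{1/2}$ in \eqref{35}, gives a bound for $\Vert u_x\Vert_{L^2(0,T;L^2)}^2$. Adding $\Vert u\Vert_{C([0,T];L^2)}$, taking square roots, and collecting the constants (the $e^{2\Vert\lambda\Vert_\infty T}$ coming from the squared sup bound and the numerical $\sqrt{3/2}$ from the three groups of terms) produces \eqref{35} with $C_T$ as in \eqref{36}.

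The main obstacle I anticipate is the rigorous justification of \eqref{37} for merely mild solutions: a priori such a solution need not have finite $\Vert u_x\Vert_{L^2}$, so the whole point is that the identity, proved for smooth data, survives the limit and thereby certifies membership in $\mathcal{B}_T$. The approximation must be arranged so that the bound on $\int_0^t\Vert u_x\Vert_2^2$ is uniform along the approximating sequence; this is exactly what the estimate above provides, since its right-hand side depends on the data only through norms stable under the approximation. The remaining effort, namely tracking the numerical constants to match \eqref{36} precisely, is routine.
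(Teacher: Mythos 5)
Your proposal follows essentially the same route as the paper: multiply the equation by $u$ and integrate by parts to obtain \eqref{37} (justified for regular data and extended by density), then combine it with the sup bound \eqref{34}, Young's inequality on the delay term, the shift $s\mapsto s-\tau$ splitting off the history integral, and Cauchy--Schwarz to produce the factor $\tau^{1/2}\Vert u\Vert_{L^2(-\tau,0;L^2(\RR))}$, thereby controlling $\int_0^t\Vert u_x\Vert_2^2\,ds$. The only cosmetic difference is that the paper absorbs the leftover $\int_0^t\Vert u\Vert_2^2\,ds$ terms by a second application of Gronwall's lemma, which is precisely how the exponential factor $e^{(\Vert\lambda\Vert_\infty+\Vert\lambda_0\Vert_\infty)T}$ in \eqref{36} arises, whereas you bound them directly by $T\Vert u\Vert^2_{C([0,T];L^2(\RR))}$; both close the estimate, yours with a (smaller, hence admissible) constant of a slightly different form.
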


\begin{proof}
Multiplying the equation by $u$ and integrating by parts we obtain \eqref{37}.
Using \eqref{34} hence we infer that
\begin{multline*}
\Vert u(t)\Vert_{L^2(\RR)}^2
+2\int_0^t \Vert u_x\Vert_{L^2(\RR)}^2 ds
\le
\Vert u(0)\Vert_{L^2(\RR)}^2\\
+2\Vert f\Vert_{L^1(0,T;L^2(\RR))} e^{\Vert \lambda\Vert_\infty T}\left ( \Vert u(0)\Vert_{L^2(\RR)}+\Vert f\Vert_{L^1(0,T;L^2(\RR))}+
\Vert \lambda\Vert_\infty\int_{-\tau }^0\Vert u(s)\Vert_{L^2(\RR)} ds \right ) \\
+2\Vert\lambda_0\Vert_\infty \int_0^t\Vert u(s)\Vert_{L^2(\RR)}^2 ds +\Vert\lambda\Vert_\infty\int_0^t\Vert u(s-\tau )\Vert_{L^2(\RR)}^2 ds +\Vert\lambda\Vert_\infty\int_0^t\Vert u(s)\Vert_{L^2(\RR)}^2 ds.
\end{multline*}
Thus we have
\begin{multline}\label{38}
\Vert u(t)\Vert_{L^2(\RR)}^2 +2\int_0^t \Vert u_x\Vert_{L^2(\RR)}^2 ds \le \Vert u(0)\Vert_{L^2(\RR)}^2+\Vert f\Vert_{L^1(0,T;L^2(\RR))}^2\\
+
e^{2\Vert \lambda\Vert_\infty T}\left ( \Vert u(0)\Vert_{L^2(\RR)}+\Vert f\Vert_{L^1(0,T;L^2(\RR))}+
\Vert \lambda\Vert_\infty \sqrt{\tau} \Vert u\Vert_{L^2(-\tau, 0; L^2(\RR))}  \right )^2 \\
+\Vert \lambda\Vert_\infty
\Vert u\Vert_{L^2(-\tau, 0; L^2(\RR))}^2
+ 2(\Vert \lambda\Vert_\infty +\Vert \lambda_0\Vert_\infty)
\int_0^t\Vert u\Vert^2_{L^2(\RR)} ds,
\end{multline}
where we have used once again the inequality
\begin{equation*}
\int_0^t\Vert u(s-\tau )\Vert_{L^2(\RR)}^2 ds\le \int_{-\tau}^0\Vert u(s)\Vert_{L^2(\RR)}^2 ds +\int_0^t\Vert u(s)\Vert_{L^2(\RR)}^2.
\end{equation*}
From \eqref{38} we have
\begin{multline*}
\Vert u(t)\Vert_{L^2(\RR)}^2 +2\int_0^t \Vert u_x\Vert_{L^2(\RR)}^2 ds \\
\le \Big (1+e^{2\Vert \lambda\Vert_\infty T}\Big )
\Big \{ \Vert u(0)\Vert_{L^2(\RR)}+\Vert f\Vert_{L^1(0,T;L^2(\RR))}+
 \Big ( \Vert \lambda\Vert_\infty \tau^{1/2} + \Vert \lambda\Vert_\infty^{1/2}  \Big )
 \Vert u\Vert_{L^2(-\tau, 0; L^2(\RR))}  \Big \}^2\\
+ 2( \Vert \lambda\Vert_\infty +\Vert \lambda_0\Vert_\infty)
\int_0^t\Vert u\Vert^2_{L^2(\RR)} ds
,
\end{multline*}
and then from Gronwall's Lemma we get
\begin{multline*}
\Vert u(t)\Vert_{L^2(\RR)}^2 +2\int_0^t \Vert u_x\Vert_{L^2(\RR)}^2 ds
\le \left (1+e^{2\Vert \lambda\Vert_\infty T}\right )
 e^{2(\Vert \lambda\Vert_\infty +\Vert \lambda_0\Vert_\infty) T}\times\\
\times
\Big \{ \Vert u(0)\Vert_{L^2(\RR)}+\Vert f\Vert_{L^1(0,T;L^2(\RR))}+
 \Big ( \Vert \lambda\Vert_\infty \tau^{1/2} + \Vert \lambda\Vert_\infty^{1/2}  \Big )
 \Vert u\Vert_{L^2(-\tau, 0; L^2(\RR))}  \Big \}^2.
\end{multline*}
Therefore
\begin{multline*}
\Vert u\Vert_{{\mathcal B}_T}^2\le
\frac 3 2 \left (1+e^{2\Vert \lambda\Vert_\infty T}\right )
e^{2(\Vert \lambda\Vert_\infty +\Vert \lambda_0\Vert_\infty) T}
\times\\
\times
\Big \{ \Vert u(0)\Vert_{L^2(\RR)}+\Vert f\Vert_{L^1(0,T;L^2(\RR))}+
 \Big ( \Vert \lambda\Vert_\infty \tau^{1/2} + \Vert \lambda\Vert_\infty^{1/2}  \Big )
 \Vert u\Vert_{L^2(-\tau, 0; L^2(\RR))}  \Big \}^2,
\end{multline*}
and so
\begin{multline*}
\Vert u\Vert_{{\mathcal B}_T}\le
\sqrt{\frac 3 2} \left (1+e^{2\Vert \lambda\Vert_\infty T}\right )^{1/2}
e^{(\Vert \lambda\Vert_\infty +\Vert \lambda_0\Vert_\infty) T}
\times\\
\times
\Big \{ \Vert u(0)\Vert_{L^2(\RR)}+\Vert f\Vert_{L^1(0,T;L^2(\RR))}+
 \Big ( \Vert \lambda\Vert_\infty \tau^{1/2} + \Vert \lambda\Vert_\infty^{1/2}  \Big )
 \Vert u\Vert_{L^2(-\tau, 0; L^2(\RR))}  \Big \}.
\end{multline*}
Thus we arrive at
\begin{equation*}
\Vert u\Vert_{{\mathcal B}_T}\le C_T \Big \{ \Vert u(0)\Vert_{L^2(\RR)}+\Vert f\Vert_{L^1(0,T;L^2(\RR))}+
 \Big ( \Vert \lambda\Vert_\infty \tau^{1/2} + \Vert \lambda\Vert_\infty^{1/2}  \Big )
 \Vert u\Vert_{L^2(-\tau, 0; L^2(\RR))}  \Big \}
\end{equation*}
with $C_T$ as in the statement.
\end{proof}

Now we consider the nonlinear model \eqref{11} with $u_0\in C([-\tau , 0]; L^2(\RR)).$
By a mild solution of \eqref{11} we mean a function $u\in {\mathcal B}_T,$ $T>0,$ which satisfies
\begin{equation*}
u(t)=S(t) u_0(0) -\int_0^t S(t-s)\lambda u(s-\tau )\, ds -\int_0^t S(t-s) u(s)\partial_x u(s) \, ds,\quad t\in [0,T].
\end{equation*}
By a global mild solution of \eqref{11} we mean a function $u:[0,\infty)\rightarrow H^1(\RR)$
whose restriction to every bounded interval $[0,T]$ is a mild solution of \eqref{11}.
We have the following well-posedness result.

\begin{Theorem}\label{t33}
Let $\lambda_0, \lambda\in L^\infty (\RR)$  satisfying \eqref{15}, \eqref{27} and \eqref{28}.
Then, for every $u_0\in C([-\tau,0]; L^2(\RR))$ the problem \eqref{11} admits a unique global mild solution.
Moreover the following identity holds for all $t\ge 0$:
\begin{multline}\label{39}
\frac 12 \Vert u(t)\Vert_{L^2(\RR)}^2+\int_0^t\Vert u_x\Vert^2_{L^2(\RR)} ds+\int_0^t\int_\RR \lambda_0 u^2(x,s) \,dx\, ds\\
+\int_0^t\int_\RR\lambda u(x, s-\tau )u(x, s) \,dx\, ds=
\frac 12 \Vert u(0)\Vert_{L^2(\RR)}^2.
\end{multline}
\end{Theorem}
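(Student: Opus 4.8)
The plan is to obtain local existence and uniqueness by a contraction–mapping argument in the space $\mathcal B_T$, to derive the energy identity \eqref{39} by the same multiplication-by-$u$ computation as in Proposition \ref{p32}, and then to upgrade the local solution to a global one by means of the a priori bound coming from the Lyapunov analysis of Theorem \ref{t23}. The decisive structural fact is that the cubic nonlinearity contributes nothing to the $L^2$ energy balance, so the energetics are identical to the linear case.

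For \textbf{local existence and uniqueness} I would work on a short interval $[0,T]$ and, following the step-by-step philosophy of Theorem \ref{t22}, treat the delay term as known data: on $[0,\tau]$ one has $u(s-\tau)=u_0(s-\tau)$, so the map
$$\Phi(w)(t)=S(t)u_0(0)-\int_0^tS(t-s)\lambda u_0(s-\tau)\,ds-\int_0^tS(t-s)\,w(s)\partial_xw(s)\,ds$$
has its first two terms fixed and bounded in $\mathcal B_T$ by Proposition \ref{p32}. The essential point is to estimate the nonlinear term. Using the interpolation inequality \eqref{215} in the form $\Vert w\Vert_\infty\le\sqrt2\,\Vert w\Vert_2^{1/2}\Vert w_x\Vert_2^{1/2}$ gives $\Vert w\partial_xw\Vert_{L^2(\RR)}\le\sqrt2\,\Vert w\Vert_2^{1/2}\Vert w_x\Vert_2^{3/2}$, and a Hölder estimate in time (with exponents $4$ and $4/3$ applied to $\Vert w_x\Vert_2^{3/2}$) yields the crucial small factor
$$\Vert w\partial_xw\Vert_{L^1(0,T;L^2(\RR))}\le C\,T^{1/4}\Vert w\Vert_{\mathcal B_T}^2.$$
Feeding this into the regularity estimate \eqref{35} (applied with $f=w\partial_xw$) shows that $\Phi$ maps a closed ball of $\mathcal B_T$ into itself; the same algebra applied to the difference $w\partial_xw-v\partial_xv=w\partial_x(w-v)+(w-v)\partial_xv$ gives the Lipschitz bound $\Vert\Phi(w)-\Phi(v)\Vert_{\mathcal B_T}\le C\,T^{1/4}R\,\Vert w-v\Vert_{\mathcal B_T}$ on the ball of radius $R$. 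Hence for $T$ small enough $\Phi$ is a contraction and the Banach fixed point theorem produces a unique mild solution in $\mathcal B_T$.

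For the \textbf{energy identity} \eqref{39} I would multiply the equation by $u$ and integrate over $\RR\times(0,t)$ exactly as in Proposition \ref{p32}, the only new contribution being the cubic term $\int_\RR u^2u_x\,dx=\frac13\int_\RR\partial_x(u^3)\,dx=0$, which vanishes. Thus \eqref{39} is precisely identity \eqref{37} with $f\equiv0$; the computation is carried out first for smooth data and then extended by the continuous dependence already established.

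The \textbf{global extension} is where the hypotheses \eqref{15}, \eqref{27}, \eqref{28} enter, and it is the main obstacle, since the local existence time produced above a priori shrinks as $\Vert u\Vert_2$ grows. The key is that, because the cubic term drops out of the $L^2$ balance, the augmented functional $\mathcal E(t)$ of \eqref{25} satisfies \emph{exactly} the inequality $\frac{d\mathcal E}{dt}\le-\gamma\mathcal E$ obtained in the proof of Theorem \ref{t23}; consequently $\mathcal E(t)$, and hence $\Vert u(t)\Vert_2$, remains uniformly bounded for all $t\ge0$. Since the length of the local interval depends only on this norm and on the (likewise controlled) history entering the delay term, no finite-time blow-up is possible, and the solution can be continued step by step across successive delay intervals to yield the unique global mild solution. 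The only genuinely delicate point is the nonlinear estimate extracting the factor $T^{1/4}$ uniformly along the iteration; everything else reduces to the linear estimates of Proposition \ref{p32} and the Lyapunov computation of Theorem \ref{t23}.
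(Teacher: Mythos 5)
Your proposal is correct and follows essentially the same route as the paper: a contraction argument in $\mathcal B_T$ on $[0,\tau]$ with the delay term frozen as $\lambda u_0(\cdot-\tau)$ and the $T^{1/4}$ gain from the interpolation inequality \eqref{215} (this is exactly the content of Lemma \ref{l34}, which you rederive by hand), the energy identity \eqref{39} obtained as \eqref{37} with $f\equiv 0$ since $\int_\RR u^2u_x\,dx=0$, and global continuation via the Lyapunov functional $\mathcal E$ of \eqref{25} combined with the step-by-step procedure of Theorem \ref{t22}. The only cosmetic difference is that the paper is content with $\frac{d\mathcal E}{dt}\le 0$ at this stage (deferring the full decay to Theorem \ref{t36}), whereas you invoke the sharper $\frac{d\mathcal E}{dt}\le-\gamma\mathcal E$; both give the a priori bound needed for globality.
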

For the proof we need the following lemma (see \cite[Proposition 4.1]{l34}):

\begin{Lemma}\label{l34}
If $u\in L^2(0,T;H^1(\RR)),$ then $uu_x\in L^1(0,T;L^2(\RR)).$ Moreover, if $u, v\in \mathcal{B}_T,$ then
\begin{equation*}
\Vert uu_x-vv_x\Vert_{L^1(0,T;L^2(\RR))}\le\sqrt{2}T^{1/4}\left ( \Vert u\Vert_{\mathcal{B}_T} + \Vert v\Vert_{\mathcal{B}_T}\right ) \Vert u-v\Vert_{\mathcal{B}_T}.
\end{equation*}
\end{Lemma}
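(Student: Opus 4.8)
The plan is to obtain the solution by a contraction-mapping argument on a short time interval, to read off the energy identity from the linear theory by checking that the nonlinear term integrates to zero, and then to globalize by means of the \emph{a priori} bound that the hypotheses \eqref{15}, \eqref{27}, \eqref{28} are tailored to produce.

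\textbf{Local existence and uniqueness.} First I would fix $T>0$ and define a map $\Phi$ on a ball $B_R=\{w\in\mathcal{B}_T:\ \norm{w}_{\mathcal{B}_T}\le R\}$ by letting $\Phi(w)=u$ be the unique solution in $\mathcal{B}_T$ of the linear inhomogeneous problem \eqref{32} with source $f=-w\partial_x w$ and the prescribed history $u_0$. By Lemma \ref{l34} the product $w\partial_x w$ belongs to $L^1(0,T;L^2(\RR))$, so Proposition \ref{p32} applies and yields both $\Phi(w)\in\mathcal{B}_T$ and the bound \eqref{35}. Taking $R$ comparable to the data part of the right-hand side of \eqref{35} and combining it with the Lipschitz estimate
\[
\norm{u\partial_x u-v\partial_x v}_{L^1(0,T;L^2(\RR))}\le\sqrt{2}\,T^{1/4}\bigl(\norm{u}_{\mathcal{B}_T}+\norm{v}_{\mathcal{B}_T}\bigr)\norm{u-v}_{\mathcal{B}_T},
\]
one checks that for $T$ small enough $\Phi$ maps $B_R$ into itself and is a strict contraction; since the histories of $\Phi(w_1)$ and $\Phi(w_2)$ coincide, the data terms cancel in the difference and only the source contributes. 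The decisive feature is the factor $T^{1/4}$, which beats the constant $C_T$ of \eqref{36} once $T$ is chosen small relative to $R$. Banach's fixed-point theorem then gives a unique mild solution on $[0,T]$.

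\textbf{The energy identity.} For this local solution I would apply the identity \eqref{37} of Proposition \ref{p32} with $f=-u\partial_x u$, the point being that the nonlinear contribution drops out:
\[
\int_0^t\!\int_\RR f(x,s)u(x,s)\,dx\,ds=-\int_0^t\!\int_\RR u^2u_x\,dx\,ds=-\frac13\int_0^t\!\int_\RR\partial_x(u^3)\,dx\,ds=0,
\]
the inner integral vanishing because $u(\cdot,s)\in H^1(\RR)$. Substituting into \eqref{37} produces \eqref{39} on the interval of existence, hence on any interval to which the solution is later continued.

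\textbf{Global extension.} Since the nonlinear term is absent from \eqref{39}, the energy balance is identical to the linear one, so the computation of the proof of Theorem \ref{t23} applies unchanged: estimating the cross term $\int_\RR\lambda u(s-\tau)u(s)\,dx$ by the Young and Hölder inequalities and invoking \eqref{15}, \eqref{27}, \eqref{28} yields $\mathcal{E}(t)\le e^{-\gamma t}\mathcal{E}(0)$ with $\gamma$ as in \eqref{210}. In particular $\norm{u(t)}_{L^2(\RR)}$ and the weighted history term in \eqref{25} stay bounded uniformly in $t$. Because the local existence time furnished above depends only on $\norm{u(t)}_{L^2(\RR)}$ and on the history of $u$ over the preceding delay window, it is bounded below along the flow, and a standard continuation argument extends the unique local solution to all of $[0,\infty)$. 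The step I expect to be the main obstacle is precisely this globalization: one must verify that the quantities controlling the local existence time — the current $L^2$ norm together with the history norm on $[t-\tau,t]$ — are exactly those bounded by the energy estimate, so that the continuation step does not shrink to zero; the contraction and the energy identity are comparatively routine consequences of Lemma \ref{l34} and Propositions \ref{p31}--\ref{p32}.
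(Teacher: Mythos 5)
Your proposal proves the wrong statement. The statement at hand is Lemma~\ref{l34} itself --- the bilinear estimate asserting that $u\in L^2(0,T;H^1(\RR))$ implies $uu_x\in L^1(0,T;L^2(\RR))$, together with the Lipschitz bound carrying the factor $\sqrt2\,T^{1/4}$. What you sketch instead is the well-posedness argument for the nonlinear problem \eqref{11} (local contraction, energy identity \eqref{39}, globalization via the Lyapunov functional), i.e.\ essentially the content of Proposition~\ref{p35} and Theorem~\ref{t33}. Worse, your argument is circular as a proof of the lemma: your very first step reads ``By Lemma~\ref{l34} the product $w\partial_x w$ belongs to $L^1(0,T;L^2(\RR))$,'' and your contraction step uses verbatim the displayed Lipschitz inequality that was to be proved. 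Nothing in your text derives that inequality; it is consumed as a hypothesis. (For the record, the paper does not prove the lemma either: it quotes it from Rosier, Proposition 4.1 of the reference attached to the lemma, so the expected task was to supply that computation.)

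A correct proof is a short computation, independent of all the semigroup machinery you invoke. Write $uu_x-vv_x=(u-v)u_x+v\,(u-v)_x$, so that for a.e.\ $t\in(0,T)$,
\begin{equation*}
\norm{uu_x-vv_x}_{L^2(\RR)}\le \norm{u-v}_\infty\norm{u_x}_2+\norm{v}_\infty\norm{(u-v)_x}_2 .
\end{equation*}
By \eqref{215} we have $\norm{w}_\infty\le 2^{1/2}\norm{w}_2^{1/2}\norm{w_x}_2^{1/2}$ for $w\in H^1(\RR)$. Integrating in time, pulling out $\sup_{t}\norm{u-v}_2^{1/2}$, and applying the H\"older inequality with exponents $(4,2,4)$ to the three factors $\norm{(u-v)_x}_2^{1/2}$, $\norm{u_x}_2$ and $1$ (the constant $1$ contributing the norm $T^{1/4}$), we get
\begin{align*}
\int_0^T\norm{u-v}_\infty\norm{u_x}_2\,dt
&\le \sqrt2\, T^{1/4}\,\norm{u-v}_{C([0,T];L^2(\RR))}^{1/2}\,\norm{(u-v)_x}_{L^2(0,T;L^2(\RR))}^{1/2}\,\norm{u_x}_{L^2(0,T;L^2(\RR))}\\
&\le \sqrt2\,T^{1/4}\,\norm{u-v}_{\mathcal{B}_T}\norm{u}_{\mathcal{B}_T},
\end{align*}
and symmetrically $\int_0^T\norm{v}_\infty\norm{(u-v)_x}_2\,dt\le\sqrt2\,T^{1/4}\norm{v}_{\mathcal{B}_T}\norm{u-v}_{\mathcal{B}_T}$; summing the two bounds gives exactly the stated inequality. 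The first assertion of the lemma follows from $\norm{uu_x}_2\le\norm{u}_\infty\norm{u_x}_2\le \sqrt2\,\norm{u}_{H^1(\RR)}^2$ and integration in $t$, which is finite precisely because $u\in L^2(0,T;H^1(\RR))$. This computation --- in particular the appearance of $T^{1/4}$ through the interpolation inequality \eqref{215} and H\"older in time --- is the entire content of the lemma; the fixed-point and continuation arguments in your proposal belong to the proofs of Proposition~\ref{p35} and Theorem~\ref{t33}, which rely on the lemma rather than establish it.
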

Applying a fixed point argument, as in \cite{Pazoto}, we get a local well-posedness result.

\begin{Proposition}\label{p35}
If $u_0\in C([-\tau, 0]; L^2(\RR)),$ then  the problem \eqref{11} has a unique mild solution on $[0,T],$ for a sufficiently small $T>0$.
Moreover, the solution satisfies \eqref{39} for all $t\in [0,T].$
\end{Proposition}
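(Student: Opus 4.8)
The plan is to establish local existence and uniqueness via the Banach fixed-point theorem on the space $\mathcal{B}_T$, exploiting the inhomogeneous linear theory already developed in Propositions \ref{p31}, \ref{p32} together with the nonlinear estimate of Lemma \ref{l34}. First I would define the solution map $\Phi$ on $\mathcal{B}_T$ by declaring $v=\Phi(u)$ to be the mild solution of the inhomogeneous linear problem \eqref{32} with forcing $f=-u\,\partial_x u$; by Lemma \ref{l34} this $f$ indeed lies in $L^1(0,T;L^2(\RR))$, so $\Phi(u)\in\mathcal{B}_T$ is well defined. Concretely, $\Phi$ sends $u$ to
\begin{equation*}
\Phi(u)(t)=S(t)u_0(0)-\int_0^t S(t-s)\lambda u(s-\tau)\,ds-\int_0^t S(t-s)u(s)\partial_x u(s)\,ds,
\end{equation*}
so that fixed points of $\Phi$ are exactly the mild solutions of \eqref{11}.

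Next I would show that, for $T$ small enough, $\Phi$ maps a suitable closed ball $B_R\subset\mathcal{B}_T$ into itself and is a strict contraction there. For invariance I would apply the estimate \eqref{35} of Proposition \ref{p32} with $f=-u\partial_x u$, bounding $\Vert u\partial_x u\Vert_{L^1(0,T;L^2(\RR))}\le \tfrac{1}{\sqrt2}\sqrt2\,T^{1/4}\cdot 2\Vert u\Vert_{\mathcal B_T}^2$ via the $v=0$ case of Lemma \ref{l34} (giving a factor $\sqrt2\,T^{1/4}\Vert u\Vert_{\mathcal B_T}^2$). This produces a bound of the form $\Vert\Phi(u)\Vert_{\mathcal B_T}\le C_T(\mathrm{data})+C_T\sqrt2\,T^{1/4}R^2$, and since $C_T\to C_0$ as $T\to0^+$ while $T^{1/4}\to0$, choosing $R$ roughly twice the data norm and then $T$ small makes the nonlinear contribution negligible, keeping $\Phi(B_R)\subset B_R$. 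For the contraction property I would feed the difference $\Phi(u)-\Phi(v)$ through \eqref{35} with forcing $-(u\partial_x u-v\partial_x v)$ and invoke the full Lipschitz estimate of Lemma \ref{l34}:
\begin{equation*}
\Vert\Phi(u)-\Phi(v)\Vert_{\mathcal B_T}\le C_T\Big(\Vert\lambda\Vert_\infty\tau^{1/2}+\Vert\lambda\Vert_\infty^{1/2}\Big)\cdot 0+C_T\sqrt2\,T^{1/4}\big(\Vert u\Vert_{\mathcal B_T}+\Vert v\Vert_{\mathcal B_T}\big)\Vert u-v\Vert_{\mathcal B_T},
\end{equation*}
where the delay and initial-data terms cancel because $u,v$ share the same initial history. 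On $B_R$ this gives a Lipschitz constant $\le 2C_T\sqrt2\,T^{1/4}R$, which is $<1$ for $T$ sufficiently small. Banach's theorem then yields a unique fixed point, the desired local mild solution.

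Finally I would derive the energy identity \eqref{39}. Since the fixed point solves \eqref{32} with $f=-u\partial_x u$, I would invoke the identity \eqref{37} of Proposition \ref{p32} with this choice of $f$, so that its right-hand side contains the extra term $\int_0^t\int_\RR(-u\partial_x u)\,u\,dx\,ds=-\tfrac13\int_0^t\int_\RR\partial_x(u^3)\,dx\,ds$. The key point is that this integral vanishes: for $u\in L^2(0,T;H^1(\RR))$ one has $u^3$ with vanishing spatial limits at $\pm\infty$ (justified by approximating with $H^3$ data as in Theorem \ref{t23} and passing to the limit, or by a density argument), hence $\int_\RR\partial_x(u^3)\,dx=0$. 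Substituting this into \eqref{37} removes the forcing term entirely and reproduces \eqref{39} exactly.

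The main obstacle is the interplay between the requirements of the fixed-point argument: one must choose the ball radius $R$ and the existence time $T$ in the correct order, using that the constant $C_T$ in \eqref{36} is bounded as $T\to0^+$ while the nonlinear factor $T^{1/4}$ tends to zero, so that both self-mapping and contraction hold simultaneously on the same $(R,T)$. The quadratic nature of the nonlinearity makes $B_R$-invariance the delicate balance, since a larger $R$ worsens the nonlinear term quadratically; the $T^{1/4}$ gain from Lemma \ref{l34} is exactly what rescues this, and verifying that the delay-dependent data terms in \eqref{35} do not interfere (they are independent of $u$ within a fixed initial history) is the secondary bookkeeping step.
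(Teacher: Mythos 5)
Your proposal is correct and follows essentially the same route as the paper: a Banach fixed-point argument on $\mathcal{B}_T$ using the linear estimate \eqref{35} of Proposition \ref{p32} for self-mapping of a ball and the bilinear estimate of Lemma \ref{l34} for the contraction, followed by the energy identity \eqref{37} with $f=-uu_x$ and the vanishing of $\int_\RR \partial_x(u^3)\,dx$. The only point worth making explicit is the restriction $0<T\le\tau$, which the paper states up front and which you use implicitly when asserting that the delayed term $u(s-\tau)$ is determined by the initial history and therefore cancels in $\Phi(u)-\Phi(v)$.
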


\begin{proof}
By Proposition \ref{p32}, the solution $u$ of  \eqref{31} satisfies the estimate \eqref{35} with the constant $C_T$  defined in \eqref{36}.
Note that $C_T$ non-decreasing in $T.$

Let $u_0 \in C([-\tau, 0]; L^2(\RR))$ be given. In order to prove the existence of a solution of \eqref{11} we introduce a map $M$ defined by
\begin{equation*}
(Mu)(t)=S(t)u_0(0)  -\int_0^t S(t-s)\lambda u_0(s-\tau )\, ds -\int_0^t S(t-s) u(s)u_x(s) \, ds,\quad t\in [0,\tau ],
\end{equation*}
in the space ${\mathcal B}_T$
with the  natural norm.
We will prove that $M$ has a fixed point in some ball $B_R(0)$ of ${\mathcal B}_T.$

We claim that there exists a $K>0$ such that
\begin{equation*}
\Vert Mu -Mv\Vert_{\mathcal{B}_T}\le KT^{1/4}\left (
\Vert u\Vert_{\mathcal{B}_T}+\Vert v\Vert_{\mathcal{B}_T}
\right ) \Vert u-v\Vert_{\mathcal{B}_T}
\end{equation*}
for all $u, v\in {\mathcal{B}_T}$ and for all $0<T\le \tau$.

According to previous observations we have
\begin{equation*}
\Vert Mu -Mv\Vert_{\mathcal{B}_T}\le C\Vert uu_x-vv_x\Vert_{L(0,T;L^2(\RR))}.
\end{equation*}
Therefore, applying the triangle and  Hölder inequalities, we deduce that
\begin{multline}\label{310}
\Vert Mu -Mv\Vert_{\mathcal{B}_T}\le
C\left (
\Vert u-v\Vert_{L^2(0,T;L^\infty(\RR))} \Vert u\Vert_{L^2(0,T;H^1(\RR))}
\right.\\
\left.+\Vert v\Vert_{L^2(0,T;L^\infty(\RR))}\Vert u-v\Vert_{L^2(0,T;H^1(\RR))}
\right ).
\end{multline}
Now from \eqref{215} we have
\begin{equation}\label{311}
\Vert u\Vert_{L^2(0,T;L^\infty(\RR))}\le CT^{1/4}\Vert u\Vert^{1/2}_{L^\infty(0,T;L^2(\RR))}
\Vert u\Vert^{1/2}_{L^2(0,T;H^1(\RR))}.
\end{equation}
From \eqref{311} we deduce in particular the inequality
\begin{equation}\label{312}
\Vert u\Vert_{L^2(0,T;L^\infty(\RR))}\le CT^{1/4}\Vert u\Vert^{1/2}_{ \mathcal{B}_T}
\Vert u\Vert^{1/2}_{ \mathcal{B}_T}= CT^{1/4}\Vert u\Vert_{ \mathcal{B}_T}.
\end{equation}
We also have
\begin{equation}\label{313}
\Vert u\Vert_{L^2(0,T;H^1(\RR))}\le C\Vert u\Vert_{ \mathcal{B}_T},
\end{equation}
where $C=\max \{ 1, \sqrt{T}\}.$
Using \eqref{312} and \eqref{313}, from \eqref{310} we get
\begin{equation}\label{314}
\Vert Mu -Mv\Vert_{\mathcal{B}_T}\le KT^{1/4}\left (
\Vert u\Vert_{\mathcal{B}_T}+\Vert v\Vert_{\mathcal{B}_T}
\right ) \Vert u-v\Vert_{\mathcal{B}_T} \qtq{for all}u, v\in {\mathcal{B}_T},
\end{equation}
with a suitable positive constant $K$.
Fix two constants $0<T\le\tau, R>0$ to be  chosen later, and take $u\in B_R(0)\subset {\mathcal{B}}_T.$
Then from \eqref{314} with $v\equiv 0$ we obtain
\begin{equation*}
\Vert Mu -M0\Vert_{\mathcal{B}_T}\le C T^{1/4}  \Vert u\Vert_{\mathcal{B}_T}.
\end{equation*}
Hence
\begin{equation*}
\Vert Mu\Vert_{\mathcal{B}_T}\le C T^{1/4}  \Vert u\Vert_{\mathcal{B}_T} +
\left\Vert S(t)u_0(0)-\int_0^tS(t-s)\lambda_0 u_0(s-\tau )\, ds \right\Vert_{\mathcal{B}_T}
\end{equation*}
and then,
using Proposition \ref{p32} with $f\equiv 0$ (recall that $t\le\tau $) we obtain that
\begin{equation*}
\Vert Mu\Vert_{\mathcal{B}_T}\le C\left (\Vert u_0(0)\Vert_{L^2(\RR)} +\Vert u\Vert_{L^2(-\tau,0;L^2(\RR))}+ T^{1/4}\Vert u\Vert^2_{\mathcal{B}_T}\right ).
\end{equation*}
It follows that $M$ maps $B_R(0)$ into itself if we choose $R=2C\left (\Vert u_0(0)\Vert_{L^2(\RR)} +\Vert u\Vert_{L^2(-\tau,0;L^2(\RR))}\right )$ and $T>0$ small enough.
Moreover, by \eqref{314} $M$ is a contraction if $T$ is sufficiently small. This proves the local well-posedness result for $0<T\le \tau$ small enough. Arguing as in the proof of Proposition \ref{p32} we obtain \eqref{39} for all $t\in [0,T].$
\end{proof}

\begin{proof}[Proof of Theorem \ref{t33}]
In order to prove that the solution is global we need to show that its norm remains bounded in the existence time interval.
For this purpose, we consider the functional ${\mathcal E}(\cdot)$ defined in \eqref{25}.
By differentiating ${\mathcal E}(t)$ we have
\begin{multline*}
\frac {d{\mathcal E}} {dt} (t)
=\int_\RR u(t)(u_{xx}(t)-\lambda_0 u(t)-\lambda u(t-\tau )+u(t)u_x(t)) dx+\frac 12\int_\RR \vert \lambda\vert u^2(t) dx
\\- \frac 12 e^{-\tau}\int_\RR \vert \lambda\vert u^2(t-\tau ) dx -\frac 12 \int_{t-\tau}^t \int_\RR e^{-(t-s)}\vert \lambda\vert u^2(x, s) dx ds.
\end{multline*}
Integrating by parts, using the Young inequality and recalling \eqref{15} and \eqref{27}, hence we obtain that
\begin{align*}
\frac {d{\mathcal E}} {dt} (t)
&\le -\int_\RR u_x^2(t) dx -\alpha_0 \int_\RR u^2(t) dx +\frac {e^\tau +1} 2 \int_\RR \vert \lambda(x)\vert  u^2 (t) dx\\
&\qquad\qquad -\frac 12 \int_{t-\tau}^t \int_\RR e^{-(t-s)}\vert \lambda\vert u^2(x, s) dx ds\\
&\le
-\int_\RR u_x^2(t) dx - (\alpha_0 -\alpha ) \int_\RR u^2(t) dx +
\int_\RR \beta (x) u^2 (t)\, dx.
\end{align*}
We can handle the third integral as in the proof of Theorem \ref{t23}, using \eqref{28}, showing that
$  \frac {d{\mathcal E}} {dt} (t)\le 0.$ This ensures that $\Vert u(t)\Vert_{L^2 (\RR)}$ remains bounded
for $t\in [0, T].$
From \eqref{39} we then deduce that $\Vert u\Vert_{\mathcal{B}_T}$ remains bounded  for $t\in [0, T].$
Therefore the local solution $u$ given by Proposition  \ref{p35} can  be extended on $[0,\tau].$
Finally, once we have a solution $u\in {\mathcal B}_\tau$ we can apply the step by step argument of Theorem  \ref{t22} proving the existence of a global mild solution.
\end{proof}

\begin{Theorem}\label{t36}
If  $\lambda_0 , \lambda\in L^\infty (\RR)$  satisfy \eqref{15}, \eqref{27} and \eqref{28}, then the problem \eqref{11} is exponentially stable. In particular, the solutions $u$  \eqref{11} satisfy the inequalities
\begin{equation}\label{315}
{\mathcal E}(t) \le C(u_0) e^{-\gamma t}
\end{equation}
with $\gamma$ and $C(u_0)$ as in \eqref{210} and \eqref{211}.
\end{Theorem}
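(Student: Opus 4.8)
The plan is to follow the Lyapunov argument of Theorem~\ref{t23} almost verbatim, the decisive observation being that the nonlinear term contributes nothing to the energy balance. The global mild solution $u$ on which $\mathcal{E}$ is evaluated is furnished by Theorem~\ref{t33}, and its $L^2$--norm (hence $\mathcal{E}(t)$) is already known to be finite on every bounded interval. First I would reduce to smooth solutions: exactly as in the proof of Theorem~\ref{t23}, I would carry out the differentiation of $\mathcal{E}$ and the integrations by parts for data $u_0\in H^3$, for which the solution enjoys the extra regularity needed to justify these manipulations, and then pass to general $u_0\in C([-\tau,0];L^2(\RR))$ by density of $H^3$ in $H$ together with the continuous dependence estimates of Propositions~\ref{p31} and~\ref{p32}.

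Next I would differentiate $\mathcal{E}(t)$ along the flow of \eqref{11}. This produces precisely the expression already displayed in the proof of Theorem~\ref{t33}: the linear terms, together with the additional contribution $\int_\RR u(t)\,u(t)u_x(t)\,dx$ arising from the nonlinearity. The key step is to note that
\[
\int_\RR u^2 u_x\,dx=\frac13\int_\RR \partial_x\!\left(u^3\right)dx=0
\]
for $u\in H^1(\RR)$, the boundary terms vanishing by decay at infinity. Thus the nonlinear term drops out entirely, and the right-hand side coincides, term by term, with the one obtained in the linear case.

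From here the remaining steps are identical to those of Theorem~\ref{t23}: integrating by parts the term $\int_\RR u\,u_{xx}\,dx=-\int_\RR u_x^2\,dx$, invoking \eqref{15} and \eqref{27}, then applying the H\"older and Young inequalities together with the interpolation bounds \eqref{213} and \eqref{215}, and finally optimizing the free parameter through the choice $4\delta^{2p}=2p$, one arrives at
\[
\frac{d\mathcal{E}}{dt}(t)\le -\gamma\,\mathcal{E}(t),
\]
with $\gamma$ as in \eqref{210} and under the smallness condition \eqref{28}. Gronwall's lemma then yields \eqref{315} with $C(u_0)=\mathcal{E}(0)$ as in \eqref{211}.

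I expect the only genuine difficulty to lie in the regularity justification of the first step: unlike the linear case, the a~priori estimates available for \eqref{11} place the solution merely in $\mathcal{B}_T$, so the formal differentiation of $\mathcal{E}$ must be legitimized either by the smoothing and density argument sketched above or, alternatively, by starting from the integrated energy identity \eqref{39} (and its delayed counterpart governing the second term of $\mathcal{E}$) and differentiating the resulting absolutely continuous function of $t$. Once this point is settled, the vanishing of the nonlinear term reduces the proof to a transcription of the computation of Theorem~\ref{t23}.
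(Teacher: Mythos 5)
Your proposal is correct and follows essentially the same route as the paper: the paper's own proof of Theorem~\ref{t36} simply declares the argument ``analogous to that of Theorem~\ref{t23}'', the whole point being (as you identify) that the nonlinear contribution $\int_\RR u^2u_x\,dx=\tfrac13\int_\RR\partial_x(u^3)\,dx$ vanishes for $u\in H^1(\RR)$, so the differential inequality $\frac{d\mathcal E}{dt}\le-\gamma\mathcal E$ and the Gronwall step carry over verbatim (this cancellation is in fact already displayed in the proof of Theorem~\ref{t33}). Your additional remarks on justifying the formal differentiation of $\mathcal E$ for mild solutions in $\mathcal B_T$ only make explicit what the paper leaves implicit.
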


\begin{proof}
The proof is analogous to that of Theorem \ref{t23}, by now using the Lyapunov functional \eqref{24}.
\end{proof}

\section{A more general model
\label{s4}}

Actually, we may consider a more general dissipative damping: the coefficient $\lambda_0$ in \eqref{12} and \eqref{11} may also change sign.

We assume, as in \cite{CCKR}, that there exist a number $\alpha >0$  and a function $\beta\in L^p(\RR)$ for some $1\le p< \infty$, such that
\begin{equation}\label{41}
\lambda_0 (x) \ge \alpha_0 -\beta_0 (x)  \qtq{for a.e.}x\in \RR ,
\end{equation}
where the function $\beta_0$ satisfies
\begin{equation}\label{42}
\Vert \beta_0\Vert_p<\Big (  \frac {\alpha_0}{c_p} \Big )^{1-\frac 1 {2p}}
\end{equation}
with the constant $c_p$ defined in \eqref{26}.

We can prove the following exponential stability result.

\begin{Theorem}\label{t41}
Let $\lambda , \lambda_0\in L^\infty (\RR),$ and assume that $\lambda_0$ satisfies \eqref{41} and \eqref{42}.
If  there exist a positive constant $\alpha$ and a function $\beta\in L^p(\RR),$ with the same $p$ as in \eqref{42}, such that the function $\lambda$ satisfies
\eqref{27}
with
\begin{equation}\label{43}
0\le\alpha < \alpha_0\qtq{and}
\Vert  \beta_0 +\beta \Vert_p<\Big (  \frac {\alpha_0-\alpha }{c_p} \Big )^{1-\frac 1 {2p}}
\end{equation}
where $c_p$ is defined in \eqref{26}, then the problem \eqref{12} is exponentially stable.
In particular, the solutions of \eqref{12} satisfy the estimates
\begin{equation}\label{44}
{\mathcal E}(t) \le C(u_0) e^{-\tilde\gamma t}
\end{equation}
with
\begin{equation}\label{45}
\tilde\gamma = \min \left\{2 \Big (
\alpha_0 -\alpha -\frac {2p-1}{2p} \Big (\frac 2 p \Big )^{\frac 1 {2p-1}}
\Vert \beta +\beta_0\Vert_p^{\frac {2p}{2p-1}}
\Big ), 1\right \}
\end{equation}
and
$C(u_0)$ is defined in \eqref{211}.
\end{Theorem}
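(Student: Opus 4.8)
The plan is to replay the Lyapunov argument of Theorem~\ref{t23} almost verbatim, since the statement concerns the \emph{linear} problem~\eqref{12}; the only genuinely new ingredient is the way the now sign-indefinite coefficient $\lambda_0$ is absorbed. As in that proof, I would first perform the computation for smooth data $u_0\in H^3$ (so that $u\in H^3$ by \cite[Th.~4.7]{CCKR} and all integrations by parts are legitimate), and then extend to arbitrary data in $H$ by density. Differentiating the functional $\mathcal{E}(t)$ of~\eqref{25} along solutions of~\eqref{12}, using the equation and the identity $\int_\RR u u_{xxx}\,dx=0$, produces precisely the same starting expression for $\frac{d\mathcal{E}}{dt}(t)$ as in Theorem~\ref{t23}.

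The single point at which the two proofs diverge is the treatment of $-\int_\RR \lambda_0 u^2(t)\,dx$. In Theorem~\ref{t23} the hypothesis~\eqref{15} gave the clean bound $-\alpha_0\int_\RR u^2(t)\,dx$. Here I would instead invoke~\eqref{41}, which yields
\[
-\int_\RR \lambda_0 u^2(t)\,dx \le -\alpha_0\int_\RR u^2(t)\,dx + \int_\RR \beta_0(x)\, u^2(t)\,dx.
\]
Coupling this with the bound~\eqref{27} on $\frac{e^\tau+1}{2}|\lambda|$ by $\alpha+\beta$, the differential inequality for $\mathcal{E}$ assumes exactly the form reached in Theorem~\ref{t23}, except that the single function $\beta$ is everywhere replaced by $\beta+\beta_0$. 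Since $\beta$ and $\beta_0$ both lie in $L^p(\RR)$ with the \emph{same} exponent $p$, their sum is again in $L^p(\RR)$, so the substitution is meaningful.

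From this point on the argument is word-for-word the one already carried out: apply Hölder's inequality, the interpolation estimate~\eqref{213}, the Gagliardo--Nirenberg inequality~\eqref{215}, and Young's inequality with the optimized parameter (the choice $4\delta^{2p}=2p$), now with $\Vert\beta+\beta_0\Vert_p$ in place of $\Vert\beta\Vert_p$. This leaves in front of $\Vert u(t)\Vert_2^2$ the coefficient
\[
\alpha_0-\alpha-\frac{2p-1}{2p}\Big(\frac 2p\Big)^{\frac 1{2p-1}}\Vert\beta+\beta_0\Vert_p^{\frac{2p}{2p-1}},
\]
which is strictly positive exactly under the smallness condition~\eqref{43}, the analogue here of~\eqref{28}. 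Together with the nonpositive delay-integral term this gives $\frac{d\mathcal{E}}{dt}(t)\le -\tilde\gamma\,\mathcal{E}(t)$ with $\tilde\gamma$ as in~\eqref{45}, and Gronwall's lemma delivers~\eqref{44} with $C(u_0)=\mathcal{E}(0)$ as in~\eqref{211}.

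I expect no real obstacle beyond careful bookkeeping: the conceptual content is simply that the indefinite excess of $\lambda_0$, measured by $\beta_0$, adds to the delay-induced excess $\beta$, and condition~\eqref{43} is calibrated so that the \emph{combined} norm $\Vert\beta+\beta_0\Vert_p$ remains small enough for the Young step to preserve a positive coefficient. The only thing worth verifying explicitly is that the constant $c_p$ appearing in~\eqref{43} coincides with the quantity $\frac{2p-1}{2p}(2/p)^{1/(2p-1)}$ emerging from the optimized Young inequality; this holds by the definition~\eqref{26}, since $1-\frac{1}{2p}=\frac{2p-1}{2p}$, so that $\Vert\beta+\beta_0\Vert_p<(\frac{\alpha_0-\alpha}{c_p})^{1-\frac 1{2p}}$ is precisely the requirement that the displayed coefficient exceed zero.
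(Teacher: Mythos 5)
Your proposal is correct and follows essentially the same route as the paper's own proof: the paper likewise differentiates $\mathcal{E}$, absorbs the indefinite part of $\lambda_0$ via \eqref{41} so that $\beta$ is replaced everywhere by $\beta+\beta_0$, and then repeats the Hölder--interpolation--Young argument of Theorem~\ref{t23} with the choice $4\delta^{2p}=2p$ before concluding by Gronwall. Your closing check that $c_p=\frac{2p-1}{2p}\bigl(\frac 2p\bigr)^{1/(2p-1)}$ makes \eqref{43} exactly the positivity condition is the right sanity check and matches the paper.
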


\begin{proof}
Analogously to the proof of Theorem \ref{t23},
differentiating ${\mathcal E}(t),$
integrating by parts and using the Young inequality we obtain the following estimate:
\begin{align*}
\frac {d{\mathcal E}} {dt} (t)
&\le -\int_\RR u_x^2(t) dx -\alpha_0 \int_\RR u^2(t) dx +\int_\RR (\beta_0 (x)+\frac {e^\tau+1} 2 \vert \lambda (x)\vert ) u^2 (t) dx\\
&\qquad\qquad -\frac 12 \int_{t-\tau}^t\int_\RR e^{-(t-s)} \vert \lambda\vert u^2(x,s) dx ds\\
&\le
-\int_\RR u_x^2(t) dx - (\alpha_0 -\alpha ) \int_\RR u^2(t) dx +
\int_\RR (\beta_0 (x)+\beta(x) ) u^2 (t) dx\\
&\qquad\qquad -\frac 12 \int_{t-\tau}^t\int_\RR e^{-(t-s)} \vert \lambda\vert u^2(x,s) dx ds.
\end{align*}
Using the Hölder inequality hence we infer that
\begin{equation}\label{46}
\frac {d{\mathcal E}} {dt} (t)\le-\Vert u_x(t)\Vert_2^2 -(\alpha_0 -\alpha)\Vert u(t)\Vert_2^2
+\Vert \beta_0 +\beta\Vert_p \Vert u\Vert_{2q}^2 -\frac 12 \int_{t-\tau}^t\int_\RR e^{-(t-s)} \vert \lambda\vert u^2(x,s) dx ds,
\end{equation}
where $q=\frac p {p-1}.$
From \eqref{46} we deduce that
\begin{multline*}
\frac {d{\mathcal E}} {dt} (t)\le-\Vert u_x(t)\Vert_2^2 -(\alpha_0 -\alpha)\Vert u(t)\Vert_2^2+\Vert \beta +\beta_0\Vert_p  \Vert u(t)\Vert_{\infty}^{\frac 2 p}  \Vert u\Vert_2^{\frac 2 q}\\
-\frac 12 \int_{t-\tau}^t\int_\RR e^{-(t-s)} \vert \lambda\vert u^2(x,s) dx ds,\quad
\end{multline*}
and then, recalling \eqref{215}
and  using the Young inequality, we obtain for every fixed $\delta >0$ the inequality
\begin{multline*}
\frac {d{\mathcal E}} {dt} (t)
\le -\Vert u_x(t)\Vert_2^2 -(\alpha_0 -\alpha)\Vert u(t)\Vert_2^2
+\frac {
\Big ( \frac 1 {\delta } \Vert \beta_0 +
\beta\Vert_p\Vert u\Vert_2^{\frac {2p-1} p}  \Big )^\frac {2p}{2p-1}}{\frac {2p}{2p-1}} +
 \frac {
 \left ( \delta 2^{\frac 1 p}\Vert u_x\Vert_2^{\frac 1 p}      \right )^{2p}}{2p}\\
 -\frac 12 \int_{t-\tau}^t\int_\RR e^{-(t-s)} \vert \lambda\vert u^2(x,s) dx ds.
\end{multline*}
Now, taking $\delta$ as before,   such that $4\delta ^{2p}=2p,$ hence we infer that
\begin{multline*}
\frac {d{\mathcal E}} {dt} (t)\le - \Big (
\alpha -\alpha_0 -\frac {2p-1}{2p} \Big (\frac 2 p \Big )^{\frac 1 {2p-1}}
\Vert \beta_0 +\beta\Vert_p^{\frac {2p}{2p-1}}
\Big ) \Vert u(t)\Vert_2^2
-\frac 12 \int_{t-\tau}^t\int_\RR e^{-(t-s)} \vert \lambda\vert u^2(x,s) dx ds.
\end{multline*}
Thus, under the assumption \eqref{43} we have
\begin{equation*}
\frac {d{\mathcal E}} {dt} (t)\le - \tilde\gamma {\mathcal E}(t)
\end{equation*}
with
\begin{equation*}
\tilde\gamma= 2 \Big (
\alpha_0 -\alpha -\frac {2p-1}{2p} \Big (\frac 2 p \Big )^{\frac 1 {2p-1}}
\Vert \beta +\beta_0\Vert_p^{\frac {2p}{2p-1}}
\Big ).
\end{equation*}
This implies the exponential estimate \eqref{44}
with $C(u_0)$ as in \eqref{211}.
\end{proof}

In the same spirit, we can also extend the well-posedness and the stability result in the nonlinear setting.

\begin{Theorem}\label{t42}
Let $\lambda_0, \lambda\in L^\infty (\RR)$  satisfy \eqref{41}, \eqref{42},\eqref{27} and \eqref{43}.
Then for every $u_0\in C([-\tau,0]; L^2(\RR))$, the problem \eqref{11} has a unique global mild solution.
Moreover, the identity  \eqref{39} holds for all $t\ge 0$.
\end{Theorem}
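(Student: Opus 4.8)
The plan is to mirror the proof of Theorem~\ref{t33}, the only change being that the Lyapunov estimate of Theorem~\ref{t23} is replaced by that of Theorem~\ref{t41}; indeed, the sign condition \eqref{15} entered the nonlinear argument only through the a priori bound that turns a local solution into a global one. First I would observe that the local well-posedness result of Proposition~\ref{p35} remains valid verbatim, since its proof rests solely on Proposition~\ref{p32} and Lemma~\ref{l34}, both of which require only $\lambda_0,\lambda\in L^\infty(\RR)$ and are insensitive to the sign of $\lambda_0.$ Thus for every $u_0\in C([-\tau,0];L^2(\RR))$ there is a unique mild solution $u\in\mathcal{B}_T$ on some interval $[0,T]$ with $0<T\le\tau,$ and, multiplying the equation by $u$ and integrating by parts as in Proposition~\ref{p32}, the identity \eqref{39} holds on $[0,T].$ The crucial point here is that the nonlinear term contributes nothing, because $\int_\RR u^2u_x\,dx=\frac13\int_\RR(u^3)_x\,dx=0,$ so that \eqref{39} is formally the identity \eqref{37} with $f\equiv0.$

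Next I would derive the a priori $L^2$ bound needed for globality by differentiating the functional $\mathcal{E}(\cdot)$ of \eqref{25} along the solution. Using the equation, the identity $\int_\RR u\,u_{xxx}\,dx=0$ and once more $\int_\RR u^2u_x\,dx=0,$ the nonlinear contribution again drops out, so the computation reduces exactly to the one in the proof of Theorem~\ref{t41}. Integrating by parts, invoking \eqref{41} to replace $-\lambda_0$ by $-\alpha_0+\beta_0,$ combining with \eqref{27}, and then applying the Hölder inequality, the interpolation inequality \eqref{215} and the Young inequality with $\delta$ chosen so that $4\delta^{2p}=2p,$ I would obtain
\[
\frac{d\mathcal{E}}{dt}(t)\le-\tilde\gamma\,\mathcal{E}(t)\le 0
\]
with $\tilde\gamma$ as in \eqref{45}, the nonnegativity of $\tilde\gamma$ being precisely the content of \eqref{43}. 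In particular $\mathcal{E}$ is nonincreasing, so $\Vert u(t)\Vert_{L^2(\RR)}$ stays bounded on $[0,T].$

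Finally I would promote the local solution to a global one. The uniform $L^2$ bound just obtained, fed into \eqref{39}, controls $\int_0^t\Vert u_x\Vert_{L^2(\RR)}^2\,ds$ and hence bounds $\Vert u\Vert_{\mathcal{B}_T};$ therefore the local solution of Proposition~\ref{p35} does not blow up and can be continued up to $t=\tau.$ Once a solution is available on $[0,\tau]$ I would run the step by step argument of Theorem~\ref{t22}: on each window $[k\tau,(k+1)\tau]$ the delayed term $\lambda u(\cdot-\tau)$ is a known forcing, so the fixed point and a priori estimates apply again, yielding a unique global mild solution for which \eqref{39} holds for all $t\ge0.$ The main obstacle is the Lyapunov estimate of the second step: since the indefinite damping $\lambda_0$ no longer provides pointwise coercivity, the negative part of $\lambda_0$ (through $\beta_0$) together with the delay contribution (through $\beta$ and the factor $\frac{e^\tau+1}{2}\vert\lambda\vert$) must be absorbed against the dissipation $-\Vert u_x\Vert_2^2$ via the interpolation \eqref{215}; this is exactly what the combined smallness condition $\Vert\beta_0+\beta\Vert_p<((\alpha_0-\alpha)/c_p)^{1-1/(2p)}$ in \eqref{43} guarantees, and it is what makes the whole argument go through in the sign-indefinite case.
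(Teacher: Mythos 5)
Your proposal is correct and is exactly the argument the paper intends (the paper omits the proof of Theorem~\ref{t42}, merely asserting it goes ``in the same spirit''): local existence via Proposition~\ref{p35}, which is insensitive to the sign of $\lambda_0$, followed by the Lyapunov computation of Theorem~\ref{t41} (with the nonlinear term vanishing since $\int_\RR u^2u_x\,dx=0$) to get the a priori bound, and then the step-by-step continuation. Your added remark that with indefinite $\lambda_0$ the terms $\int_0^t\int_\RR\lambda_0u^2$ and the delay term in \eqref{39} must be bounded via $\Vert\lambda_0\Vert_\infty,\Vert\lambda\Vert_\infty$ and the $L^2$ bound before \eqref{39} yields control of $\int_0^t\Vert u_x\Vert_2^2\,ds$ is a correct and worthwhile precision.
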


\begin{Theorem}\label{t43}
Let  $\lambda_0, \lambda\in L^\infty (\RR)$  satisfy \eqref{41}, \eqref{42}, \eqref{27} and \eqref{43}.
Then, the problem \eqref{11} is exponentially stable.
In particular, the solutions of \eqref{12} satisfy the estimate
\begin{equation*}
{\mathcal E}(t) \le C(u_0) e^{-\tilde\gamma t}
\end{equation*}
with $\tilde\gamma$ as in \eqref{45}
and $C(u_0)$ as in \eqref{211}.
\end{Theorem}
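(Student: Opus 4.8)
The plan is to combine the global well-posedness already furnished by Theorem~\ref{t42} with the dissipative computation of Theorem~\ref{t41}, the only new feature being that the nonlinear term produces no contribution to the energy balance. Concretely, by Theorem~\ref{t42} we dispose, for every $u_0\in C([-\tau,0];L^2(\RR))$, of a unique global mild solution $u$ of \eqref{11} satisfying the identity \eqref{39}. As in the proof of Theorem~\ref{t23}, I would first carry out the computations for regular data, so that the corresponding solution is smooth enough to integrate by parts, and then pass to a general solution in $H$ by density, using the continuous dependence estimate \eqref{34}.

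The key observation is that the extra term $uu_x$ in \eqref{11}, when tested against $u$, contributes
\begin{equation*}
\int_\RR u(x,t)\,u(x,t)u_x(x,t)\,dx=\frac13\int_\RR \partial_x\big(u^3(x,t)\big)\,dx=0,
\end{equation*}
since $u(\cdot,t)$ decays at infinity; this is exactly why the nonlinear term is already absent from \eqref{39}. Differentiating the Lyapunov functional $\mathcal{E}(t)$ defined in \eqref{25} along the solution, and using the equation together with $\int_\RR uu_{xxx}\,dx=0$ and the cancellation above, I would obtain the same expression for $\frac{d\mathcal{E}}{dt}(t)$ as in the linear case, namely the integration-by-parts term $-\norm{u_x(t)}_2^2$, the damping term $-\int_\RR\lambda_0u^2$, the delayed cross term $-\int_\RR\lambda\,u(t-\tau)u(t)$, and the three terms coming from the memory integral in \eqref{25}.

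From this point the argument is verbatim that of Theorem~\ref{t41}. Applying Young's inequality with the weight $e^{\tau}$ to the cross term $-\int_\RR\lambda\,u(t-\tau)u(t)$, the $u^2(t-\tau)$ contributions cancel against the boundary term $-\frac12 e^{-\tau}\int_\RR|\lambda|u^2(t-\tau)$ of the memory integral, while the $u^2(t)$ contributions combine into $\frac{e^{\tau}+1}2\int_\RR|\lambda|u^2(t)$. Invoking \eqref{41} to estimate $-\int_\RR\lambda_0u^2\le-\alpha_0\norm{u(t)}_2^2+\int_\RR\beta_0u^2$ and \eqref{27} to bound $\frac{e^{\tau}+1}2|\lambda|\le\alpha+\beta$, I would reach the inequality
\begin{equation*}
\frac{d\mathcal{E}}{dt}(t)\le-\norm{u_x(t)}_2^2-(\alpha_0-\alpha)\norm{u(t)}_2^2+\int_\RR(\beta_0+\beta)u^2(t)\,dx-\frac12\int_{t-\tau}^t\int_\RR e^{-(t-s)}|\lambda|u^2\,dx\,ds,
\end{equation*}
which is precisely \eqref{46}. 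Estimating $\int_\RR(\beta_0+\beta)u^2$ by Hölder, then using the interpolation inequality \eqref{215} and Young's inequality with the optimal choice $4\delta^{2p}=2p$ exactly as before, the assumption \eqref{43} yields $\frac{d\mathcal{E}}{dt}(t)\le-\tilde\gamma\,\mathcal{E}(t)$ with $\tilde\gamma$ given by \eqref{45}; Gronwall's lemma then produces \eqref{44} with $C(u_0)$ as in \eqref{211}.

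I expect the main, though purely technical, obstacle to be the rigorous justification of the pointwise differentiation of $\mathcal{E}$ and of the integrations by parts, since a mild solution in $\mathcal{B}_T$ need not possess the regularity that makes $\int_\RR uu_{xxx}\,dx=0$ and $\int_\RR u^2u_x\,dx=0$ literally meaningful; this is circumvented by approximation with smooth solutions together with the energy identity \eqref{39}, which already encodes these cancellations in integrated form. Everything else is a transcription of the linear estimate of Theorem~\ref{t41}, with $\beta$ replaced throughout by $\beta_0+\beta$.
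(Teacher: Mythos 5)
Your proposal is correct and follows essentially the same route the paper intends: the paper states Theorem~\ref{t43} without a written proof, relying on the reader to combine the energy computation of Theorem~\ref{t41} (with $\beta$ replaced by $\beta_0+\beta$) with the cancellation $\int_\RR u^2u_x\,dx=0$ of the nonlinear term, exactly as in the passage from Theorem~\ref{t23} to Theorem~\ref{t36}. Your treatment of the delayed cross term, the use of \eqref{41}, \eqref{27}, \eqref{215} and the optimal Young exponent $4\delta^{2p}=2p$, and the density/regularization caveat all match the paper's argument.
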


\section*{Acknowledgments}
The first author was supported by the grant NSFC No. 11871348. The research of the second author was partially supported by
GNAMPA 2018 project ``Analisi e controllo di modelli differenziali non lineari'' (INdAM).
This work has been initiated during the first author's visit of the Department DISIM of Università di L'Aquila in December 2016.
He thanks the members of the department for their hospitality.

\end{document}